\numberwithin{equation}{section}
\def\cont{\mathtt{cont}}
\def\totalcont{\mathtt{totalcont}}
\def\q{\mathtt{quasi}}
\def\quasi{\mathtt{quasi}}
\def\mult{\mathtt{mult}}
\def\foot{\mathtt{foot}}
\def \ep{\varepsilon}
\def \F {\mathcal F}
\def \A {\mathcal A}
\def \cP {\mathcal P}
\def \cR {\mathcal R}
\def \cQ {\mathcal Q}
\renewcommand{\l}{\left}
\renewcommand{\r}{\right}
\def \rL{\mathrm{L}}
\def \C{\mathbb{C}}
\def \N{\mathbb{N}}
\def \M2{\mathrm{M}_2}
\def \R{\mathbb{R}}
\def \Z{\mathbb{Z}}
\def \T{\mathbb{T}}
\def \A{\mathcal{A}}
\def \sl2r{\mathrm{SL}(2,\R)}
\newcommand{\beq}{\begin{equation}}
\newcommand{\eeq}{\end{equation}}
\def\diag{\operatorname{diag}}
\newcommand{\eqdef}{\stackrel{\rm def}{=\kern-3.6pt=}}
\theoremstyle{plain}
\newtheorem{theorem}{\bf Theorem}[section]
\newtheorem{lemma}[theorem]{\bf Lemma}
\newtheorem{prop}[theorem]{\bf Proposition}
\newtheorem{cor}[theorem]{\bf Corollary}
\theoremstyle{definition}
\theoremstyle{remark}
\newtheorem{remark}[theorem]{\bf Remark}
\theoremstyle{cond}
\renewcommand{\le}{\leqslant}
\renewcommand{\ge}{\geqslant}
\renewcommand{\qed}{\vrule height7pt width5pt depth0pt}
\title{Absence of flat bands for discrete periodic graph operators with generic potentials}
\author[M. Faust]{Matthew Faust}
\address{Department of Mathematics,
	Michigan State University,
	Wells Hall, 619 Red Cedar Road,
	East Lansing, MI, 48840,
	United States of America}
\thanks{The first named author was partially supported by the National Science Foundation DMS--2052519 grant.}
\author[I. Kachkovskiy]{Ilya Kachkovskiy}
\address{Department of Mathematics,
	Michigan State University,
	Wells Hall, 619 Red Cedar Road,
	East Lansing, MI, 48840,
	United States of America}
\thanks{The second named author was partially supported by the National Science Foundation DMS--1846114 and  DMS--2052519 grants, and the Sloan Research Fellowship 2022.}
\begin{document}
\maketitle
\begin{abstract}
We show that Schr\"odinger-type operators on discrete connected periodic graphs do not have flat bands for generic potentials.
\end{abstract}
\section{Introduction}
In this paper, we consider discrete long range Schr\"odinger-type operators on periodic graphs. Under some natural finiteness conditions, spectral theory of such operators can be studied using the Floquet transform. In the self-adjoint case, the spectra of these operators consist of finite unions of intervals (spectral bands). In many situations, such as the usual discrete periodic Schr\"odinger operator on $\mathbb Z^d$, it is known that the bands are non-flat (in other words, no band collapses into a point), in which case the spectrum of the corresponding operator is purely absolutely continuous. For more general graphs it is known that flat bands (or, equivalently, eigenvalues in the spectrum that will automatically have infinite multiplicity) exist in some cases. The most simple way to produce a flat band is to consider a periodic graph with a finite connected component. Clearly, each periodic copy of this component will carry a set of eigenvectors with compact supports. However, examples of flat bands can also be found beyond this (trivial) mechanism, see, for example, \cite{Sabri,SK,ktw,Overview}.

The goal of the present paper is to show that, for all discrete periodic graphs satisfying some natural finiteness and connectedness conditions, {\it an open dense set of potentials does not produce any flat bands}. 
\subsection{Schr\"odinger-type operators on discrete periodic graphs} Let $(\mathcal V,\tau,\mathfrak v,\mathfrak a)$ be the following data describing the discrete a $\Z^d$-periodic graph and the additional structures required to define a Schr\"odinger-type operator.
\begin{itemize}
    \item[(p1)] $\mathcal V$ is an infinite set with a free action $\tau\colon \mathbb Z^d\times \mathcal V\to \mathcal V$ of $\Z^d$. For $x\in \mathcal V$, this action will be denoted by $x\mapsto x+\alpha$, $\alpha\in \Z^d$, and will be sometimes referred to as a translation of $x$ by $\alpha$.
    \item[(p2)] The potential $\mathfrak v\colon \mathcal V\to \mathbb \C$ and the edge weight function $\mathfrak a\colon \mathcal V\times \mathcal V\to \C$ are translation-invariant (in other words, $\Z^d$-periodic):
    $$
    \mathfrak v(x+\alpha)=\mathfrak v(x),\quad \mathfrak a(x,y)=a(x+\alpha,y+\alpha),\quad \forall x,y\in \mathcal{V},\,\alpha\in \Z^d.
    $$
    \item[(p3)] The edge weight function satisfies the weak symmetry condition
    $$
\mathfrak a(x,y)=0\quad\text{if and only if }\quad \mathfrak a(y,x)=0;\quad \mathfrak a(x,x)=0, \quad \forall x,y\in \mathcal{V}.
    $$
    One can define the adjacency relation by $x\sim y$ if and only if $\mathfrak a(x,y)\neq 0$. As a consequence, the pair $\Gamma=(\mathcal V,\sim)$ becomes a (weighted) graph with an action of $\Z^d$.
    \item[(p4)] For the above graph $\Gamma$, we assume that the degree of each vertex is finite, and the fundamental domain of $\Gamma$ with respect to the action of $\mathbb Z^d$ contains only finitely many vertices and finitely many edges. The latter condition is equivalent to the fact that, for each $x\in \mathcal V$, there are only finitely many $\alpha\in \Z^d$ with $\mathfrak a(x,x+\alpha)\neq 0$.    
\end{itemize}
We note that Condition (p4) is a manifestation of local finiteness, which is assumed to hold in most of the current work on discrete and quantum graphs. Without this condition, spectral theory of periodic Schr\"odinger-type operators is much more complicated, as one can see from the recent work \cite{Sabri2}. The weak symmetry condition in (p3) is important for many of the constructions in this paper. We note that one usually assumes $\mathfrak a$ to be symmetric or self-adjoint as in \eqref{eq_self_adjoint} below.

For $\psi\in \ell^2(\mathcal V)$,  the Schr\"odinger-type operator $H$ on the above graph is defined as follows:
\beq
\label{eq_h_def}
(H\psi)(x)=\mathfrak v(x)\psi(x)+\sum\limits_{y\in \mathcal V\colon y\sim x}\mathfrak a(x,y)\psi(y).
\eeq
Due to (p4), the latter sum is always finite. Assuming the above conditions, it is easy to see that \eqref{eq_h_def} defines a bounded operator on $\ell^2(\mathcal V)$, which will be self-adjoint if and only if 
\beq
\label{eq_self_adjoint}
\mathfrak v(x)\in \R,\quad \mathfrak a(x,y)=\overline{\mathfrak a(y,x)}, \quad \forall x,y\in \mathcal{V}.
\eeq
Moreover, this operator is $\Z^d$-periodic, in the sense that it commutes with every $\Z^d$-translation.

\subsection{Floquet theory and flat bands} In order to state the main result and provide context for references, we will also need introduce some basic results of Floquet theory. Let $\Lambda=\mathcal V/\Z^d$ be a fundamental domain of $\mathcal V$ under the action of $\Z^d$. We will identify $\Lambda$ with a subset of $\mathcal{V}$ by picking some representative from each equivalence class. Let 
$$
N:=\#\Lambda<+\infty
$$
as was assumed in (p4) above. The Floquet transform will depend on the specific choice of the representative set, but recalculating between different such choices is straightforward. Define the new edge weight function $\mathfrak b\colon \Lambda\times\Lambda\to \C[z_1,z_1^{-1},\ldots,z_d,z_d^{-1}]$ by
\beq
\label{eq_b_def}
\mathfrak b(x,y)=\mathfrak b(z;x,y):=\sum\limits_{\alpha\in \Z^d}\mathfrak a(x,y+\alpha)z^{\alpha}=\sum_{\alpha\in \mathcal A}b_{\alpha}(x,y)z^{\alpha},\quad z^{\alpha}=z_1^{\alpha_1}\cdot\ldots\cdot z_d^{\alpha_d}.
\eeq
Note that the first summation, in fact, happens over the finite set $\{\alpha\in \Z^d\colon x\sim y+\alpha\}$, since otherwise $\mathfrak a(x,y+\alpha)=0$. As $\Lambda$ itself is finite, the notation in the second summation reflects the fact that the total range of values of the multi-index $\alpha$ can be assumed to be contained in some finite subset $\A\subset\Z^d$, independent of $x$.

For $z\in (\C^*)^d=(\C\setminus\{0\})^d$, one can now introduce the following family of ``fiber operators''
\beq
\label{eq_hz_def}
(h(z)\psi)(x)=\mathfrak v(x)\psi(x)+\sum\limits_{y\in \Lambda}\mathfrak b(z;x,y)\psi(y),\quad \psi\in \ell^2(\Lambda).
\eeq
Clearly, the above is an analytic (in fact, algebraic) family of operators acting on the finite-dimensional Hilbert space $\ell^2(\Lambda)$. Since the latter can be identified with $\C^N$, one can also consider them as $(N\times N)$-matrix-valued functions. From the construction, we have that
$$
h(z)=h(z)^*,\quad \text{for}\quad z\in \T^d=\{w\in \C^d\colon |w_1|=\ldots=|w_d|=1\}\subset (\C^*)^d.
$$
For $\theta=(\theta_1,\ldots,\theta_d)\in \R^d$, define also (with some abuse of notation)
\beq
\label{eq_def_htheta}
h(\theta):=h(e^{2\pi i\theta_1},\ldots,e^{2\pi i \theta_d}).
\eeq
This way, the torus $\T^d$ can also be identified with $[0,1)^d$. Since the action of $\Z^d$ on $\mathcal V$ is free, one can identify $\mathcal V$ with $\Lambda\times \Z^d$. Let
$$
\mathcal F\colon \ell^2(\Lambda\times \Z^d)\to \rL^2(\Lambda\times \T^d)=\rL^2([0,1)^d;\ell^2(\Lambda))=\rL^2([0,1)^d;\C^N)
$$
be the Fourier transform, defined on vectors with finite support by
$$
(\F\psi)(x,\theta)=\sum_{n\in \Z^d}e^{2\pi i n\cdot\theta}\psi(x,n),\quad \text{where}\quad x\in \Lambda,\,\, \theta\in \T^d,
$$
and extended into $\ell^2$ by continuity. The following proposition is usually known as the direct integral representation in Floquet theory and, in the stated form, can easily be verified by direct calculation.
\begin{prop}
\label{prop_floquet}
The operator $\mathcal F H\mathcal F^{-1}$, acting on $\rL^2([0,1)^d;\ell^2(\Lambda))$, is the operator of multiplication by the matrix-valued function $\theta\mapsto h(\theta)$ defined in \eqref{eq_hz_def}, \eqref{eq_def_htheta}.
\end{prop}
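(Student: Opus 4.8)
The plan is to verify the operator identity $\F H = M_h \F$ directly on a dense subspace, where $M_h$ denotes multiplication by the matrix-valued function $\theta\mapsto h(\theta)$, and then to extend by continuity. Since $H$ is bounded on $\ell^2(\mathcal V)$, the transform $\F$ is unitary, and $M_h$ is bounded on $\rL^2([0,1)^d;\ell^2(\Lambda))$ (because $h$ is continuous, hence bounded, on the compact torus $\T^d$), it suffices to establish $\F H\psi = M_h\F\psi$ for every finitely supported $\psi\in\ell^2(\Lambda\times\Z^d)$; by linearity one may even reduce to $\psi=\delta_{(x_0,n_0)}$.

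The next step is to rewrite the action of $H$ in the coordinates $\Lambda\times\Z^d$. Identifying the vertex $(x,n)$ with $x+n\in\mathcal V$ via the free $\Z^d$-action, and writing an arbitrary neighbour of $x+n$ as $y+m$ with $y\in\Lambda$, $m\in\Z^d$, the definition \eqref{eq_h_def} together with the $\Z^d$-periodicity from (p2) (applied by translating both arguments of $\mathfrak a$ by $-n$) gives, after the substitution $\alpha=m-n$,
\[
(H\psi)(x,n)=\mathfrak v(x)\,\psi(x,n)+\sum_{y\in\Lambda}\sum_{\alpha\in\Z^d}\mathfrak a(x,y+\alpha)\,\psi(y,n+\alpha).
\]
Here, for each fixed pair $(x,y)$, the inner sum runs over the finite set $\{\alpha\in\Z^d:x\sim y+\alpha\}$ by (p4), so all sums involved are finite whenever $\psi$ has finite support.

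Finally, apply $\F$. By definition $(\F H\psi)(x,\theta)=\sum_{n\in\Z^d}e^{2\pi i n\cdot\theta}(H\psi)(x,n)$; the potential term contributes $\mathfrak v(x)(\F\psi)(x,\theta)$ at once, while in the remaining term one interchanges the sum over $n$ with the finite sums over $y$ and $\alpha$ and performs the shift $k=n+\alpha$. This shift yields $\sum_{n}e^{2\pi i n\cdot\theta}\psi(y,n+\alpha)=z^{\alpha}(\F\psi)(y,\theta)$ for the appropriate monomial $z^{\alpha}$ in the fiber variable $z=e^{2\pi i\theta}$ (as in \eqref{eq_def_htheta}), and summing this against $\mathfrak a(x,y+\alpha)$ reconstitutes exactly $\mathfrak b(\theta;x,y)$ from \eqref{eq_b_def}. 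Hence
\[
(\F H\psi)(x,\theta)=\mathfrak v(x)\,(\F\psi)(x,\theta)+\sum_{y\in\Lambda}\mathfrak b(\theta;x,y)\,(\F\psi)(y,\theta)=\bigl(h(\theta)\,(\F\psi)(\cdot,\theta)\bigr)(x)
\]
by \eqref{eq_hz_def}, which is the claimed identity on finitely supported vectors; the density argument above then completes the proof. There is no serious obstacle: the statement is a bookkeeping exercise, and the only points deserving attention are the passage from the dense subspace to all of $\ell^2(\mathcal V)$ (handled by the boundedness of $H$ and of $M_h$) and the careful tracking of how a translation $n\mapsto n+\alpha$ in the lattice variable turns into a monomial in the fiber variable — which is precisely why the edge-weight polynomial $\mathfrak b$ is defined as it is in \eqref{eq_b_def}.
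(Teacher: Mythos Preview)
Your argument is precisely the ``direct calculation'' the paper refers to; the paper gives no proof beyond that phrase, so your approach coincides with it. One minor slip: the shift $k=n+\alpha$ actually yields $\sum_{n}e^{2\pi i n\cdot\theta}\psi(y,n+\alpha)=e^{-2\pi i\alpha\cdot\theta}(\F\psi)(y,\theta)=z^{-\alpha}(\F\psi)(y,\theta)$, not $z^{\alpha}$, so with the sign conventions of \eqref{eq_b_def} and the Fourier transform as written one lands on $\mathfrak b(z^{-1};x,y)$ and hence on multiplication by $\theta\mapsto h(-\theta)$; this is a harmless convention mismatch (irrelevant for everything that follows, since $h(\theta)$ and $h(-\theta)$ have the same spectrum), but the exponent in your displayed identity is off by a sign.
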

Suppose now that $H$ is self-adjoint, that is, \eqref{eq_self_adjoint} holds. For $\theta\in [0,1)^d$, let $E_j(\theta)$ be the $j$-th eigenvalue of $h(\theta)$, counting multiplicity, in the non-decreasing order. The $j$-th spectral band
$$
[E_j^-,E_j^+]:=\{E_j(\theta)\colon \theta\in [0,1)^d\}
$$
is defined as the range of the corresponding band function. Proposition \ref{prop_floquet}, implies
\beq
\label{eq_spectral_types}
\sigma(H)=\cup_{j=1}^N[E_j^-,E_j^+]; \quad \sigma_{\mathrm{ac}}(H)=\bigcup_{j\colon E_j^+>E_j^-}[E_j^-,E_j^+].
\eeq
where $N=\dim\ell^2(\Lambda)$ is the cardinality of $\Lambda$.

As mentioned above, it is possible to have $E_j^-=E_j^+$. In this case, one of the band functions must be constant, which corresponds to an eigenvalue of infinite multiplicity for $H$. It is natural to call this situation a {\it flat band}. The general definition of a flat band requires some additional considerations, since an eigenvalue of $h(z)$ that is constant in $z$ may be split between different functions $E_j$ due to band crossings that would change eigenvalue numeration. Additionally, while we are mostly interested in the self-adjoint case 
\eqref{eq_self_adjoint}, some of the arguments to follow require involving complex-valued potentials, in which case eigenvalue ordering does not make sense. In order to address both of these issues, we will be using the following well-known result.
\begin{prop}
\label{prop_flat_equivalent}
Define $H$ and $h(z)$ as above. For $E\in \C$, the following are equivalent:
\begin{enumerate}
    \item $E$ is an eigenvalue of $H$.
    \item $E$ is an eigenvalue of $H$ of infinite multiplicity.
    \item For some $j$, the set $\{\theta\in [0,1)^d\colon E_j(\theta)=E\}$ has positive Lebesgue measure in $[0,1)^d$.
    \item $E$ is an eigenvalue of $h(z)$ for all $z\in (\C^*)^d$.
\end{enumerate}
\end{prop}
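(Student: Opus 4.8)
The plan is to prove the cycle $(2)\Rightarrow(1)\Rightarrow(4)\Rightarrow(2)$ and, separately, the equivalence $(3)\Leftrightarrow(4)$; the implication $(2)\Rightarrow(1)$ is trivial. Everything is driven by two ingredients. The first is Proposition~\ref{prop_floquet}, which converts statements about $H$ into pointwise‑in‑$\theta$ statements about the matrices $h(\theta)$. The second is the observation that, by \eqref{eq_b_def}, the entries of $h(z)$, and hence the function $p_E(z):=\det\bigl(h(z)-E\,\mathrm{Id}_N\bigr)$, are Laurent polynomials in $z_1,\dots,z_d$. The single non‑routine point is the analytic dichotomy: \emph{a Laurent polynomial whose restriction to $\T^d$ vanishes on a set of positive Lebesgue measure must vanish identically on $(\C^*)^d$}. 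I would prove this by induction on $d$, using the one‑variable fact that a nonzero trigonometric polynomial has only finitely many zeros on $\T$ together with Fubini, and then noting that a Laurent polynomial vanishing on all of $\T^d$ vanishes on the Zariski‑dense set $(\C^*)^d$.

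For $(1)\Rightarrow(4)$: if $H\psi=E\psi$ with $\psi\neq0$, then applying $\F$ and using Proposition~\ref{prop_floquet} gives $h(\theta)(\F\psi)(\theta)=E\,(\F\psi)(\theta)$ for a.e.\ $\theta\in[0,1)^d$. Since $\F\psi\neq0$ in $\rL^2([0,1)^d;\C^N)$, the set $\{\theta\colon(\F\psi)(\theta)\neq0\}$ has positive measure, and on it $(\F\psi)(\theta)$ is a nonzero element of $\ker\bigl(h(\theta)-E\,\mathrm{Id}_N\bigr)$, so $p_E(\theta)=0$ there. The dichotomy above then yields $p_E\equiv0$ on $(\C^*)^d$, which is exactly statement $(4)$.

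For $(4)\Rightarrow(2)$: again $p_E\equiv0$, so $h(z)-E\,\mathrm{Id}_N$ is singular over the fraction field $\C(z_1,\dots,z_d)$ of the Laurent polynomial ring $R=\C[z_1^{\pm1},\dots,z_d^{\pm1}]$. Choosing a nonzero kernel vector with entries in $\C(z)$ and clearing denominators, I obtain $v(\cdot\,;z)=\bigl(v(x;z)\bigr)_{x\in\Lambda}\in R^N\setminus\{0\}$ with $\bigl(h(z)-E\,\mathrm{Id}_N\bigr)v(z)=0$ in $R^N$. Expanding $v(x;z)=\sum_{\alpha}v_\alpha(x)z^\alpha$ (a finite sum), the vector $\psi$ on $\mathcal V\cong\Lambda\times\Z^d$ given by $\psi(x,n):=v_n(x)$ is finitely supported and nonzero, and satisfies $(\F\psi)(x,\theta)=v(x;e^{2\pi i\theta_1},\dots,e^{2\pi i\theta_d})$; hence, by Proposition~\ref{prop_floquet}, $\F(H\psi)=h(\theta)\F\psi=E\,\F\psi$, i.e.\ $H\psi=E\psi$, so $E$ admits a compactly supported eigenvector. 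Since $H$ commutes with the $\Z^d$‑action and $\psi$ has finite support, I can choose $\beta_1,\beta_2,\ldots\in\Z^d$ so that the translates of $\psi$ by $\beta_k$ have pairwise disjoint supports; these are linearly independent eigenvectors with eigenvalue $E$, so $E$ is an eigenvalue of infinite multiplicity. (Alternatively, one can take a measurable selection $\theta\mapsto\Psi(\theta)\in\ker\bigl(h(\theta)-E\,\mathrm{Id}_N\bigr)$, restrict it to a countable disjoint family of positive‑measure subsets of $[0,1)^d$, and apply $\F^{-1}$.)

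Finally, $(3)\Leftrightarrow(4)$: if $\{\theta\colon E_j(\theta)=E\}$ has positive measure for some $j$, then $p_E$ vanishes on a positive‑measure subset of $\T^d$, so $p_E\equiv0$ and $(4)$ holds; conversely, if $(4)$ holds then $E\in\spec h(\theta)$ for every $\theta$, so $[0,1)^d=\bigcup_{j=1}^N\{\theta\colon E_j(\theta)=E\}$, and since each of these finitely many sets is measurable (the $E_j$ being continuous on $[0,1)^d$ in the self‑adjoint case), at least one has positive measure, which is $(3)$. In the complex‑valued situations arising later, where the ordered functions $E_j(\theta)$ are unavailable, I would read $(3)$ as ``$\{\theta\colon E\in\spec h(\theta)\}$ has positive measure'', and the two arguments above carry over verbatim. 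The main obstacle is the analytic dichotomy used in $(1)\Rightarrow(4)$ and $(3)\Rightarrow(4)$; the remaining steps are routine once Proposition~\ref{prop_floquet} is in hand.
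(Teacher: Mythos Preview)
Your argument is correct and rests on the same two pillars the paper invokes in its sketch: the direct‑integral representation of Proposition~\ref{prop_floquet}, and the analytic continuation principle for the Laurent polynomial $z\mapsto\det(h(z)-E)$. The organization differs slightly---the paper links $(1)\Leftrightarrow(2)$, $(1)\Leftrightarrow(3)$, and $(3)\Leftrightarrow(4)$ separately, while you run the cycle $(2)\Rightarrow(1)\Rightarrow(4)\Rightarrow(2)$---but the content is the same, with one notable exception.

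For $(4)\Rightarrow(2)$ you clear denominators in a kernel vector over $\C(z_1,\dots,z_d)$ to obtain a Laurent‑polynomial eigenvector, and then read off its finitely many Fourier coefficients to produce a \emph{compactly supported} eigenfunction of $H$; disjoint translates then give infinite multiplicity. The paper instead argues $(1)\Rightarrow(2)$ directly: any $\ell^2$ eigenvector, translated by $\Z^d$, generates infinitely many distinct eigenvectors (a finite‑dimensional $\Z^d$‑invariant subspace of $\ell^2$ would contain a joint eigenvector for all translations, which cannot lie in $\ell^2$). Your route is slightly longer but buys more: it is precisely the construction alluded to in Remark~\ref{rem_sabri}, showing that flat bands always admit compactly supported eigenfunctions, a fact the paper cites from \cite{Sabri} without proof. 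Either argument suffices for the proposition as stated.
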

The equivalence between (1) and (2) is the consequence of the fact that $\Z^d$-translation of every eigenvector is a different eigenvector with the same eigenvalue, the equivalence between (1) and (3) is the general property of multiplication operators, and the equivalence between (3) and (4) is the consequence of the analytic continuation principle for the functions $z\mapsto \det (h(z)-E)$ and standard arguments from measure theory. We refer the reader to \cite{Overview,Ku_book} for a more detailed review of Floquet theory. More specifically, see also \cite{RS4,fs} for the discussion regarding direct integrals and \cite{Sabri,FLPreprint} for some calculations specific for the discrete case.
 
We say that the operator $H$ (not necessarily self-adjoint) has a {\it flat band} at the energy $E$ if one of the claims in Proposition \ref{prop_flat_equivalent} holds.
\begin{remark}
\label{rem_sabri}
One can also show (see, for example, \cite{Sabri}) that the eigenspace associated to a flat band will always be spanned by compactly supported eigenfunctions, although we will not be using this fact.
\end{remark}

\subsection{The main result} Since the potential $\mathfrak v$ is $\Z^d$-periodic, it is uniquely determined by its values on $\Lambda$. As above, let us identify $\Lambda$ with $\{1,\ldots,N\}$, where $N=\#\Lambda$, and let $(V_1,\ldots,V_N)\in \C^N$ be the corresponding values of the potential.
\begin{theorem}
\label{th_main_graph}
Fix $\mathcal V$, $\tau$, and $\mathfrak a$ as in $(p1)$ -- $(p4)$ and suppose that the associated graph $\Gamma$ is connected. Then the set
\beq
\label{eq_v_set}
\{(V_1,\ldots,V_N)\in \C^N\colon H\text{ it has a flat band at some energy $E\in \C$}\}
\eeq
is contained in some proper affine algebraic sub-variety of $\C^N$.
\end{theorem}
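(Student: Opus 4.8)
The plan is to reformulate the flat-band condition algebraically, show that the set \eqref{eq_v_set} is Zariski-closed, and then reduce the whole theorem to producing a single potential for which $H$ has no flat band.

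\textbf{Step 1 (algebraic reformulation).} By Proposition~\ref{prop_flat_equivalent}, $H$ has a flat band at $E$ precisely when $\det(h(z)-E)$ vanishes identically in $z\in(\C^*)^d$. Writing $h(z)=D_V+B(z)$ with $D_V=\diag(V_1,\dots,V_N)$ and $B(z)=(\mathfrak b(z;x,y))_{x,y\in\Lambda}$, we expand the Laurent polynomial
$$
\det(h(z)-E)=\det(D_V-E+B(z))=\sum_{\beta}\Phi_\beta(E,V)\,z^{\beta},
$$
a finite sum by (p4), whose coefficients $\Phi_\beta$ are polynomials in $E$ and in $V=(V_1,\dots,V_N)$ with coefficients in the field $\mathbb{K}:=\mathbb{Q}(\{b_\alpha(x,y)\})$ generated over $\Q$ by the (finitely many) coefficients of $\mathfrak b$ in \eqref{eq_b_def}. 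Thus \eqref{eq_v_set} equals $\mathcal{Z}:=\{V\in\C^N:\exists E\in\C,\ \Phi_\beta(E,V)=0\ \forall\beta\}$.

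\textbf{Step 2 (the locus is a subvariety).} Expanding $\det(D_V-E+B(z))$ by multilinearity in the diagonal gives $\sum_{S\subseteq\{1,\dots,N\}}\big(\prod_{j\notin S}(V_j-E)\big)\det B(z)[S]$, where $B(z)[S]$ is the principal submatrix on $S$ and $\det B(z)[\emptyset]=1$. Since $\mathfrak a(x,x)=0$, for $|S|=1$ we have $\det B(z)[\{j\}]=B(z)_{jj}=\sum_{\alpha\neq 0}\mathfrak a(x_j,x_j+\alpha)z^{\alpha}$, which has vanishing constant term; hence the $z^{0}$-coefficient satisfies $\Phi_0(E,V)=\prod_{j}(V_j-E)+(\text{terms of degree }\le N-2\text{ in }E)$, and is therefore, up to sign, monic of degree $N$ in $E$. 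Consequently $\C[E,V]/(\Phi_0)$ is module-finite over $\C[V]$, so the projection $\{\Phi_0=0\}\to\C^{N}$ is a finite, hence closed, morphism; as $\mathcal{Z}$ is the image under it of the closed set $\{\Phi_\beta=0\ \forall\beta\}$, $\mathcal{Z}$ is Zariski-closed in $\C^N$. It therefore suffices to prove $\mathcal{Z}\neq\C^N$: then $\mathcal{Z}$ itself is the desired proper affine subvariety, and its complement is automatically open and dense.

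\textbf{Step 3 (a potential without a flat band).} Set $\rho:=\sup_{z\in\T^d}\|B(z)\|<\infty$, which depends only on $\mathfrak a$. I would pick $V^{\ast}\in\R^N$ with all gaps $|V_j^{\ast}-V_k^{\ast}|$ far larger than $\rho$ and with $V_1^{\ast},\dots,V_N^{\ast}$ algebraically independent over $\mathbb{K}$ (possible since $\mathbb{K}$ is countable). For $z\in\T^d$ the matrix $h(z)$ is self-adjoint with $\spec h(z)\subset\bigcup_j(V_j^{\ast}-\rho,V_j^{\ast}+\rho)$, so a flat band would sit at a real $E$ within $\rho$ of exactly one $V_{j_0}^{\ast}$, and for each such $z$ the operator $h(z)$ would have exactly one eigenvalue $\mu_{j_0}(z)$ in $(V_{j_0}^{\ast}-\rho,V_{j_0}^{\ast}+\rho)$ — hence simple and locally real-analytic — necessarily equal to the constant $E$. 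The Schur complement of $h(z)-E$ relative to the $j_0$-th coordinate, expanded as a Neumann series that converges uniformly on $\T^d$ because the gaps dominate $\rho$, yields
$$
\mu_{j_0}(z)=V_{j_0}^{\ast}+B(z)_{j_0j_0}+\sum_{\ell\ge 2}g_\ell(z),
$$
where $g_\ell(z)$ is a signed sum over closed walks $w$ of length $\ell$ in the graph on $\Lambda$, based at $x_{j_0}$ and not returning to $x_{j_0}$ in between, each weighted by the product of the hopping amplitudes along $w$ (which carries the monomial $z^{\mathrm{wind}(w)}$ recording the total $\Z^d$-displacement of $w$) divided by $\prod_{y\in\mathrm{int}(w)}(V_y^{\ast}-E)$, with $\|g_\ell\|_{L^\infty(\T^d)}\le C(\rho/\mathrm{gap})^{\ell-1}\rho$. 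Connectedness of $\Gamma$ enters here: $x_{j_0}$ and $x_{j_0}+e_1$ lie in the same component, and reducing a connecting path modulo $\Z^d$ produces a closed walk at $x_{j_0}$ with nonzero winding; a shortest such walk $w_0$ does not revisit $x_{j_0}$ in between, so it contributes a monomial $z^{\gamma}$, $\gamma\neq0$, to the term of its own length and to none of smaller length. The coefficient of $z^{\gamma}$ in $\mu_{j_0}$ is then dominated by the contribution of walks of the length of $w_0$, a nonzero $\mathbb{K}$-rational expression in $V^{\ast}$ and $E$; the algebraic independence of $V^{\ast}$ prevents it from vanishing, while the remaining longer-walk terms are of strictly smaller size by the gap estimate. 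Hence $\mu_{j_0}$ would be non-constant, a contradiction, so $V^{\ast}\notin\mathcal{Z}$.

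\textbf{Step 4 (the main obstacle).} The genuinely delicate point is the last part of Step~3 — ruling out that the surviving $z$-dependence of the trapped eigenvalue is killed by cancellations among equal-winding walk contributions (and between them and the perturbative tail). This is exactly where connectedness must be used essentially — any graph with a finite connected component has a flat band for every potential — so the argument cannot treat it cosmetically, and the bookkeeping of walk weights against the sizes $(\rho/\mathrm{gap})^{\ell-1}$ of the successive corrections, together with the genericity of $V^{\ast}$, is the technical heart of the proof. An alternative that avoids perturbation theory is to work directly with the Laurent polynomial $\det(h(z)-E)$: degenerate it one torus variable at a time (pass to the coefficient of the extreme power of $z_d$ and iterate) and induct on $d$ down to the vacuous case $d=0$; there the cost is controlling how the ``boundary'' operator produced at each step can fail to remain connected.
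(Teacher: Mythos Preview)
Your Steps 1 and 2 are sound; in particular, noting that $\Phi_0$ is monic in $E$ so that $\{\Phi_0=0\}\to\C^N$ is a finite (hence closed) morphism is a clean alternative to the paper's appeal to the closure theorem from elimination theory (Proposition~\ref{prop_projection_lemma}). The overall reduction to exhibiting one flat-band-free potential, and the idea of doing so perturbatively in the large-gap regime, also match the paper's strategy.

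The genuine gap is the unproved assertion in Step~3 that the coefficient of $z^\gamma$ at the minimal walk length is ``a nonzero $\mathbb{K}$-rational expression in $V^*$ and $E$''. Algebraic independence of $V^*$ over $\mathbb{K}$ helps only once you know the expression is nonzero \emph{as a rational function of the $V$-variables}, and this is exactly what can fail: several walks of the same minimal length and winding $\gamma$ may share the same footprint (multiset of interior vertices), yielding a common denominator $\prod_y(V_y^*-E)^{m_y}$ and a numerator that is a sum of products of edge weights---fixed elements of $\mathbb{K}$, untouched by your genericity hypothesis---which can vanish identically. Neither the gap estimate (which merely separates different orders in $\ell$) nor the transcendence of $V^*$ addresses this same-order, same-footprint cancellation. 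Your Step~4 flags the issue but misdiagnoses it as ``bookkeeping of walk weights against the sizes of the successive corrections'': the obstruction is not a tail estimate but a purely combinatorial uniqueness statement about walks on the quotient graph $\Gamma/\Z^d$. The paper's entire Section~3 (Theorem~\ref{th_main_loops}) is devoted to filling precisely this gap, by defining an \emph{extremal} loop (shortest with nonzero quasimomentum, then fewest distinct footprint vertices) and proving, through a chain of reductions---no attachments, no triple visits, forced mirror symmetry of the repeated segment, a special permutation lemma for the non-repeated segment---that such a loop is the unique contributor with its footprint and quasimomentum; even then one exceptional ``multi-edge'' case forces passage to a carefully chosen symmetric loop of length one greater. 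Producing a single walk with nonzero winding, which is all connectedness gives you directly, is the easy part.
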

\begin{remark}
\label{rem_direct}
In a more direct language, the set \eqref{eq_v_set} is the intersection of finitely many zero sets of non-constant polynomials in $V_1,\ldots,V_N$ with complex coefficients (the number of the polynominals depends on the cardinality of $\Lambda$ and the degrees of vertices in the graph $\Gamma$). Here, we use the terminology of \cite{CLOIVA}. In some other textbooks, subsets of this kind would referred to as closed affine algebraic subsets of $\C^N$, reserving the notion of an algebraic variety for objects with more structure.

As a consequence, the complement of \eqref{eq_v_set} in $\C^N$ contains a Zariski open subset, which is also an open dense connected subset in the usual (analytic) topology. Moreover, the set of real potentials in \eqref{eq_v_set} is also contained in a proper algebraic sub-variety of $\R^N$ (defined by the same equations), and therefore the set of real potentials that do not have flat bands also contains an open and dense subset of $\R^N$.

In the self-adjoint case, one can easily check that flat bands are only possible for $E\in\R$.
\end{remark}
\begin{remark}
\label{rem_connected}
If $(p1)$ -- $(p4)$ are satisfied but $\Gamma$ is not connected, then one can still apply Theorem \ref{th_main_graph} to each infinite connected component of $\Gamma$. The associated operators $H$, acting on smaller subspaces, will also have no flat bands generically.

The operator associated to each finite connected component, considered together with its translations by the $\Z^d$-action, will satisfy $(p1)$ -- $(p4)$, but will only have eigenvalues of infinite multiplicities in the spectrum, producing a trivial case of flat bands that was mentioned earlier in the Introduction. 

As a consequence, the assumption of the main theorem can be relaxed to $\Gamma$ not having any finite connected components.
\end{remark}

\subsection{Discussion and references} As mentioned in the introduction, for general connected graphs one cannot expect flat bands for all potentials; see, for example, \cite[Sections 3 and 4]{Sabri} for a large class of examples. The well-known example of Lieb lattice is also described in Subsection 3.4 of the present paper. Our main result, Theorem \ref{th_main_graph}, answers affirmatively the question \cite[Problem 2]{Sabri}. In this setting, edge weights are fixed and the potential is generic. In the case where both edge weights and the potential are allowed to be generic, a version of this question has been affirmatively answered in the recent preprint \cite{FLPreprint}. However, except for the setting and some general considerations, there is very little similarity between the method of \cite{FLPreprint} and the present paper. The work on the present paper started after \cite{FLPreprint} appeared.

There is a large body of work regarding absence of eigenvalues in the spectra of periodic operators in the continuum, starting from the classical result of \cite{Thomas}, without any genericity assumptions. A comprehensive list of references would probably span over 100 publications. We refer the reader to \cite{BSu99} and \cite[Section 6]{Overview} for some of them, see also \cite[Chapter 16]{RS4} and \cite{Ku_book} for a more textbook-like treatment. In some particular cases of discrete graphs, such as $\Z^d$ with a maximal rank lattice of periods, arguments along the lines of \cite{Thomas} also imply absence of eigenvalues/flat bands for discrete Schr\"odinger operators; however, it is harder to track exact original references, since the proofs are quite elementary and were likely to be known in the community for a while. See, for example, \cite[Theorem 2.6]{Kruger} or \cite[Proposition 3.1]{fk2}, although neither of the references claims it as an original result. See also \cite{Higuchi,Sabri,SK,ktw}.

More generally, the presence of a flat band indicates that the Bloch variety associated to the corresponding operator is reducible. Therefore, absence of flat bands can be concluded from irreducibility of the said variety, which has lately become a topic of a large body of work.  Some examples of recent results in this area are contained in \cite{Overview, shipmansottile, liujmp22, kuchment2023analytic,fg25,flm22,liu1,lslmp20,flm23,LeeLiShip, Shipman}, along with applications \cite{LiuQE,Shipman2, Embed, lmt, liu1,Kuchment1998,KV, KV22}.

\subsection{Structure of the proof} Due to the complex-analytic nature of the problem, it is reasonable to expect that absence of flat bands is an ``analytic condition'': that is, if one produces a non-empty open subset of potentials with this property in $\C^N$, then it would automatically hold for an open dense subset. Since the algebraic formulation of this condition requires an elimination of a quantifier, this is not completely obvious, however, it is made precise in Corollary \ref{cor_generic_flat_bands} in the next section.

As a consequence, it remains to show that an open subset of potentials does not have flat bands. The luxury of choosing an open subset allows us to consider a perturbative regime where exact calculations are possible, which is the case where all values $V_s$ are distinct, large, and are separated from one another in $\C^N$. By rescaling, one can instead fix any potential with distinct values and put a small parameter $\varepsilon$ in front of the hopping term  (small coupling regime). In this case, the eigenvalues of the rescaled operator $h_{\ep}(z)$, assuming that the components of $z$ are away from $0$ and $\infty$, are small perturbations of the diagonal entries of $h_{\ep}(z)$, which are the values of the potential. Perturbation series for these eigenvalues, known as {Rayleigh--Schr\"odinger series}, are well-known in physics literature. 

In the setting of fixed edge weights and generic $V$, the problem essentially reduces to showing that one cannot have complete cancellations among certain classes of the terms in these series. While this is now a purely algebraic problem, the general structure of the graph and the use of diagrams makes it surprisingly non-trivial.

In Section 2.1, we state the rescaled version of the problem and the general ``soft'' result of the analytic perturbation theory (Proposition \ref{prop_analytic_branches}). In Subsection 2.2, we describe the structure of perturbation series in the small hopping regime and describe in detail the language of the associated diagrams (which we call loop configurations). We are using the modified notation of \cite{KPS}, however, we note that multiple different forms of these series have appeared in literature over the last 100 years (see also \cite{Arnold} for some discussions). As mentioned in Subsection 2.4, in the regime of the present paper there are no issues with convergence of the series and ordering of the terms. In Subsection 2.5, which concludes with Corollary \ref{cor_main_reduction}, we discuss several reductions of the original question in Theorem \ref{th_main_graph} to, ultimately, a question about possible cancellations among certain subsets of terms in the series.

These cancellations are addressed in Section 3. The main idea is to find a class of terms, described above, that contains only one non-trivial loop configuration. This class is described in Subsection 3.1 and is obtained by maximizing and minimizing certain combinatorial properties of the associated path, in a particular order. The main result of Section 3 is Theorem \ref{th_main_loops}, which shows that such extremal configuration does indeed produce a term that does not cancel for generic $V$. The proof of Theorem \ref{th_main_loops} spans Subsections 3.2 and 3.3. For graphs with edges of a certain kind, the original definition of an extremal loop configuration is not sufficient, and one has to go (very carefully) to the next order of perturbation theory to observe the absence of cancelations. Some examples are considered in the final Subsection 3.4.

\section{Perturbation theory in the regime of small coupling}
The goal of this section is to produce, under the assumptions of Theorem \ref{th_main_graph}, a set of values of the potentials that would guarantee no flat bands in the sense of Proposition \ref{prop_flat_equivalent}. It will be convenient to introduce some rescaling and put a small parameter in front of the second term.

\subsection{The setting} Let $\mathcal A\subset \Z^d$ be a finite set and $N\in \N$. Let also
$$
V:=\diag\{V_1,\ldots,V_N\},\quad z:=(z_1,\ldots,z_d),\quad z^{\alpha}:=z_1^{\alpha_1}z_2^{\alpha_2}\ldots z_d^{\alpha_d}.
$$
Finally, let
\beq
\label{eq_delta_def}
\{{\mathfrak b}_{\alpha}\colon \alpha\in\A\},\quad ({\mathfrak b}_{\alpha})_{ij}\neq 0 \,\text{ if and only if }\,({\mathfrak b}_{-\alpha})_{ji}\neq 0,\quad ({\mathfrak b}_0)_{ii}=0.
\eeq
be a family of $(N\times N)$-matrices indexed by $\alpha\in \A$. We will considering the following matrix family on $\C^N$:
\beq
\label{eq_he_def}
h_{\ep}(z):=V+\ep\sum\limits_{\alpha\in \A}z^\alpha {\mathfrak b}_{\alpha},
\eeq
where $\ep$ is a small parameter.

For $r>0$, let
\beq
\label{eq_Br_def}
\mathcal B_r:=\{(V_1,\ldots,V_N)\in \C^N\colon \min\{|V_i-V_j|\colon 1\le i<j\le N\}>r\}
\eeq
be the space of potentials whose values are separated from one another with distances bounded from below by $r$. Clearly, $\mathcal V_r$ is an open subset of $\C^N$.

Suppose also that the values of $z$ are restricted to an open subset $Z\subset (\C^*)^d=\l(\C\setminus\{0\}\r)^d$, whose closure is compact in $(\C^*)^d$. Finally, let
\beq
\label{eq_mb_def}
M_{{\mathfrak b}}:=\max\{\|{\mathfrak b}_{\alpha}\|\colon \alpha\in \A\}
\eeq
be a bound on the norms of the terms of \eqref{eq_h_def}. The following result is well known.
\begin{prop}
\label{prop_analytic_branches}
Fix $r,M_{{\mathfrak b}}>0$ and some $Z\subset (\C^*)^d$ as above. There exists $\ep_0=\ep_0(r,\A,Z,M_{{\mathfrak b}})>0$ such that the operator \eqref{eq_h_def} has a family of eigenvalues
$$
\lambda_j=\lambda_j(\ep),\quad \lambda_j(0)=V_j,\quad j=1,\ldots,N
$$
which are analytic in $\{\ep\in \C\colon |\ep|<\ep_0\}$. Moreover, if one considers them as functions of $z$ and $V$, they are also analytic in those variables assuming $V\in \mathcal B_r$ and $z\in Z$.
\end{prop}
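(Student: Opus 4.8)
This statement is the textbook first-order analytic perturbation theory for an analytic family of matrices perturbing a diagonal matrix with simple spectrum, and the plan is to run the Riesz-projection argument while carefully tracking the uniformity in $z$ and $V$. First I would extract the relevant uniform bounds. Since $\overline{Z}$ is compact in $(\C^*)^d$, the quantity $C_Z:=\sup\{|z^\alpha|\colon z\in\overline{Z},\ \alpha\in\A\}$ is finite, so for $|\ep|<\ep_0$ the perturbation term obeys $\bigl\|\ep\sum_{\alpha\in\A}z^\alpha{\mathfrak b}_\alpha\bigr\|\le|\ep|\,(\#\A)\,M_{{\mathfrak b}}\,C_Z$ uniformly in $z\in Z$ and in the choice of matrices ${\mathfrak b}_\alpha$ with $\|{\mathfrak b}_\alpha\|\le M_{{\mathfrak b}}$. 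I then take $\ep_0=\ep_0(r,\A,Z,M_{{\mathfrak b}})$ to be any positive number with $\ep_0\,(\#\A)\,M_{{\mathfrak b}}\,C_Z<r/2$, so that $\|h_\ep(z)-V\|<r/2$ for all $|\ep|<\ep_0$, $z\in Z$, $V\in\mathcal B_r$.

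For each $j\in\{1,\ldots,N\}$ let $\Gamma_j$ be the positively oriented circle $\{w\in\C\colon|w-V_j|=r/2\}$. For $V\in\mathcal B_r$ the eigenvalues $V_1,\ldots,V_N$ of $V$ are pairwise at distance $>r$, whence $\dist(w,\spec V)=r/2$ for every $w\in\Gamma_j$, and the closed discs bounded by $\Gamma_1,\ldots,\Gamma_N$ are pairwise disjoint. On $\Gamma_j$ one has $\|h_\ep(z)-V\|\,\|(w-V)^{-1}\|<(r/2)\cdot(2/r)=1$, so the Neumann series
\[
(w-h_\ep(z))^{-1}=(w-V)^{-1}\sum_{n\ge 0}\bigl((h_\ep(z)-V)(w-V)^{-1}\bigr)^n
\]
converges, the resolvent exists on all of $\Gamma_j$, and the Riesz projection
\[
P_j(\ep,z,V):=\frac{1}{2\pi i}\oint_{\Gamma_j}(w-h_\ep(z))^{-1}\,dw
\]
is well defined. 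Since at $\ep=0$ this is the rank-one spectral projection onto the $j$-th coordinate axis of $\C^N$, and the rank of an idempotent is locally constant, $\rank P_j(\ep,z,V)=1$ throughout; equivalently $\Gamma_j$ encloses exactly one eigenvalue of $h_\ep(z)$ counted with multiplicity, and the $\Gamma_j$ jointly account for all $N$ eigenvalues.

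Next I would set $\lambda_j(\ep):=\lambda_j(\ep,z,V):=\tr\bigl(h_\ep(z)P_j(\ep,z,V)\bigr)$. Because $P_j$ projects onto a one-dimensional $h_\ep(z)$-invariant subspace, $h_\ep(z)P_j=\lambda_j P_j$ and $\tr P_j=1$, so $\lambda_j$ is precisely the eigenvalue of $h_\ep(z)$ enclosed by $\Gamma_j$; in particular $\lambda_j(0)=V_j$. Analyticity is inherited from the integrand: for each fixed $w$ on the compact contour $\Gamma_j$, the map $(w-h_\ep(z))^{-1}$ is jointly analytic in $(\ep,z,V)$ on $\{|\ep|<\ep_0\}\times Z\times\mathcal B_r$ — immediate from the Neumann series, whose terms are polynomials in the entries of $h_\ep(z)$ and of $(w-V)^{-1}$ — and the integral over $\Gamma_j$, being a uniform limit of Riemann sums of jointly analytic functions, is again jointly analytic (equivalently, one integrates the uniformly convergent Neumann series term by term on relatively compact subsets, or differentiates under the integral sign and invokes Hartogs' theorem). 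Hence $P_j$ and $\lambda_j=\tr(h_\ep(z)P_j)$ are analytic in $(\ep,z,V)$, which yields every assertion.

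The argument is entirely ``soft'', so there is no genuinely hard step; the two points deserving attention are (i) pinning down $\ep_0$ with the correct uniformity — this is exactly where compactness of $\overline{Z}$ in $(\C^*)^d$ and the separation built into the definition of $\mathcal B_r$ are used — and (ii) justifying that joint analyticity survives the contour integration, handled by term-by-term integration of the uniformly convergent Neumann series (or by Hartogs' theorem after differentiating under the integral). One should also note that the conclusion is stated only for the eigenvalues and the Riesz projections, which is all that is needed later; producing a globally analytic eigenbasis would require slightly more bookkeeping and is not claimed here.
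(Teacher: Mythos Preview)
Your argument is correct and is the standard Riesz-projection proof of analytic perturbation theory for simple eigenvalues. The paper itself does not prove this proposition: it declares the result well known and, in the remark and subsection that follow, instead supplies the explicit Rayleigh--Schr\"odinger series for $\lambda_j$, whose uniform convergence (Lemma~\ref{lemma_convergence}) furnishes the analyticity asserted here. So your route and the paper's are the two textbook alternatives: yours is the soft resolvent/projection argument that yields existence and joint analyticity directly, while the paper's is the constructive series expansion that gives the same analyticity once convergence is established and, more importantly for the paper's purposes, provides access to the individual coefficients that are analyzed in Section~3. One small point worth tightening in your write-up: the contour $\Gamma_j$ moves with $V_j$, so the phrase ``for each fixed $w$ on the compact contour $\Gamma_j$ the resolvent is jointly analytic in $(\ep,z,V)$'' does not quite parse as written; the usual fix is to parametrize $w=V_j+(r/2)e^{i\theta}$ and note that the integrand is analytic in $V$ for each fixed $\theta$, or to freeze the contour locally in $V$ and invoke Cauchy's theorem.
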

\begin{remark}
\label{rem_multivariate}
Note that one has to be careful in defining multivariate analytic functions, especially in subsets that are not necessarily simply connected. However, we will soon provide non-ambiguous explicit expressions for these eigenvalue branches that are valid in the whole range under consideration (perhaps after modifying the choice of $\ep_0$).
\end{remark}

\subsection{Perturbation series}
As a consequence of Proposition \ref{prop_analytic_branches}, the eigenvalues $\lambda_j$ can be expressed as formal power series in $\ep$, converging for $|\ep|<\ep_0$. These series are commonly known as Rayleigh--Schr\"odinger perturbation series. While the total coefficient at $\ep^k$ in the series for $\lambda_j$ is a unique well-defined object, the complete expression for this object is usually combinatorial and involves some choices that one can make in order to obtain a convenient representation. Due to the complexity of the expansion, the notation also plays an important role. We will use the construction described in \cite{KPS} with some modifications that are specific to the structure of the operator family \eqref{eq_h_def}.

The first object that we will need to consider is a simple loop, which is an expression of the form
\beq
\label{eq_p_def}
\cP=n_0\xrightarrow{\alpha_1}n_1\xrightarrow{\alpha_2}\ldots\xrightarrow{\alpha_k}n_k,
\eeq
where
$$
n_0=n_k=j\in \{1,\ldots,N\};\quad  n_1,\ldots n_{k-1}\in\{1,\ldots, N\}\setminus \{j\};\quad \alpha_i\in \A.
$$
One can consider $\cP$ as a representation of a closed path on $\{1,\ldots,N\}$, starting and ending at $j$ and not being allowed to visit $j$ in between. On each step, one is allowed to choose which of the operators ${\mathfrak b}_{\alpha}$ (``hopping terms'') will be used to perform the step. It is convenient to consider an associated oriented multi-graph $\mathcal G$ whose vertices are $\{1,\ldots,N\}$, and each non-zero matrix element $({\mathfrak b}_{\alpha})_{ij}$ is associated to an edge between $i$ and $j$. We will say that this edge has {\it quasimomentum} $\alpha$ to distinguish contributions from different ${\mathfrak b}_{\alpha}$.

As mentioned above, \eqref{eq_p_def} is called a {\it simple loop}, and it will give a contribution to a coefficient at $\lambda_j$. We will sometimes refer to it as a (simple) $j$-loop, in order to emphasize the fact that $n_0=n_k=j$. In order to specify the value of this contribution, let us introduce some notation. For $\cP$ defined as in \eqref{eq_p_def}, let
$$
\cont(\cP):=({\mathfrak b}_{\alpha_1})_{n_0 n_1}(V_j-V_{n_1})^{-1}({\mathfrak b}_{\alpha_2})_{n_1 n_2}(V_j-V_{n_2})^{-1}({\mathfrak b}_{\alpha3})_{n_2 n_3}\ldots (V_j-V_{n_{k-1}})^{-1}({\mathfrak b}_{\alpha_{k}})_{n_{k-1} n_k}.
$$
One can interpret this contribution as a product of ``vertex factors'' $(V_j-V_{n})^{-1}$ obtained by visiting a point $n\in \{1,\ldots N\}\setminus \{j\}$, and ``edge factors'' $({\mathfrak b}_{\alpha_s})_{n_{s-1}n_s}$ that are generated by the hopping process. For $\cP$ as in \eqref{eq_p_def}, define it's {\it length} and {\it quasimomentum}, and the {\it footprint} by
$$
|\cP|:=k,\quad \q(\cP):=\alpha_1+\ldots+\alpha_k,\quad \foot(\cP)=\{n_1,\ldots,n_{k-1}\}_{\mult}
$$
to be the number of edges, the total quasimomentum of the edges, and the multiset of vertices visited by $\cP$. In $\foot(\cP)$, each vertex is counted as many times as it appears, but the order does not matter.
The complete contribution by $\cP$ into $\lambda_j$ will be $\ep^k z^{\q(\cP)}\cont(\cP)$.

Unfortunately, this does not end the combinatorial construction. In order to obtain the complete description of $\lambda_j$, we also need to describe the attachment process. Suppose that $\cP$ is a simple $j$-loop as in \eqref{eq_p_def}. The attachment procedure is replacing any entry $n_s$, $s=1,\ldots,k-1$, by an expression
$$
n_s\xrightarrow{}(m_0\xrightarrow{\beta_1}m_1\xrightarrow{\beta_2}\ldots\xrightarrow{\beta_\ell}m_\ell)\xrightarrow{} n_s,
$$
where $m_0\xrightarrow{\beta_1}m_1\xrightarrow{\beta_2}\ldots\xrightarrow{\beta_\ell}m_\ell$ is another $j$-loop. This construction can be iterated: in the above example one can replace any $m_i\neq j$ by any $j$-loop, and so on. The result of finitely many such procedures is called a {\it loop configuration}.

Note that the entry $n_s$ became duplicated, which means that each of the copies can be used for further iterating the attachment. However, the arrows that lead to the inserted loop have no indexes over them, which reflects the fact that they will not contribute further to the quasimomentum. Each attachment procedure will generate an extra factor $-(V_{j}-V_{n_s})^{-1}$. More precisely, let 
$$
\cP=n_0\xrightarrow{\alpha_1}n_1\xrightarrow{\alpha_2}\ldots\xrightarrow{\alpha_k}n_k,\quad \cQ=m_0\xrightarrow{\beta_1}m_1\xrightarrow{\beta_2}\ldots\xrightarrow{\beta_\ell}m_\ell,
$$
where
$$n_0=n_k=m_0=m_\ell=j;\quad n_1,\ldots,n_{k-1},m_1,\ldots,m_{\ell-1}\in \{1,\ldots,N\}\setminus\{j\},
$$
and $\cP'$ be the result of attaching $\cQ$ to $\cP$ in the location $n_s$:
\beq
\label{eq_pprime_def}
\cP'=n_0\xrightarrow{\alpha_1}n_1\xrightarrow{\alpha_2}\ldots \xrightarrow{\alpha_s}n_s\xrightarrow{}(m_0\xrightarrow{\beta_1}m_1\xrightarrow{\beta_2}\ldots\xrightarrow{\beta_\ell}m_\ell)\xrightarrow{}n_s\xrightarrow{\alpha_{s+1}}\ldots \xrightarrow{\alpha_k}n_k.
\eeq
In other words, a copy of $n_s$ in the expression for $\cP$ is replaced by the expression $n_s\xrightarrow{}(m_0\xrightarrow{\beta_1}m_1\xrightarrow{\beta_2}\ldots\xrightarrow{\beta_\ell}m_\ell)\xrightarrow{}n_s$.

We define
$$
\cont(\cP'):=-(V_j-V_{n_s})^{-1}\cont(\cP)\cont(\cQ),\quad |\cP'|:=|\cP|+|\cQ|,
$$
$$
\foot(\cP'):=\foot(\cP)\cup\foot(\cQ)\cup\{n_s\}_{\mult},
$$
where the union is considered in the sense of multisets (that is, counting multiplicities). We also define
$$
\q(\cP'):=\q(\cP)+\q(\cQ)=\alpha_1+\ldots+\alpha_k+\beta_1+\ldots+\beta_\ell.
$$
As mentioned above, the attachment procedure can be iterated, with the same rules applied inductively. For example, in \eqref{eq_pprime_def}, further attachments can be performed at $n_1,\ldots,n_{k-1}$, $m_1,\ldots,m_{\ell-1}$. Since there are two copies of $n_s$, the attachments can be performed at each of them separately, in general, producing a different loop configuration (unless the attachment is of the same loop $\cQ$ as before).

The following proposition is the complete characterization of the Rayleigh--Schr\"odinger perturbation series. We refer the reader to \cite{KPS} for the proof, which has slightly different notation but the conversion is straightforward. Note that understanding the notation is perhaps the most challenging aspect of the proof.
\begin{prop}
\label{prop_rayleigh_series}
Let $\lambda_j$ be defined as in Proposition $\ref{prop_analytic_branches}$. Then
\beq
\label{eq_formal_series}
\lambda_j=V_j+\sum_{k=1}^{\infty} \ep^k \sum\limits_{\cP\colon |\cP|=k}z^{\q(\cP)}\cont(\cP),
\eeq
where the inner summation is considered over all loop configurations $\cP$ with $|\cP|=k$.
\end{prop}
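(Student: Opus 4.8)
The plan is to obtain \eqref{eq_formal_series} from the general combinatorial description of the Rayleigh--Schr\"odinger series established in \cite{KPS}, the only new ingredient being a refinement of the diagrams that keeps track of quasimomenta. Under the hypothesis $V\in\mathcal B_r$ of Proposition \ref{prop_analytic_branches} the values $V_1,\dots,V_N$ are pairwise distinct, so each $V_j$ is a simple eigenvalue of the diagonal matrix $V$, with eigenprojection $e_je_j^*$ and reduced resolvent $S_j=\sum_{n\neq j}(V_j-V_n)^{-1}e_ne_n^*$. Writing $h_\ep(z)=V+\ep W$ with $W=W(z):=\sum_{\alpha\in\A}z^\alpha\mathfrak b_\alpha$ puts us exactly in the setting of \cite{KPS}: a diagonalizable operator with a distinguished simple eigenvalue, perturbed linearly in a parameter. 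That reference expresses the $\ep^k$-coefficient of $\lambda_j$ as a finite sum over rooted diagrams encoding $k$ successive applications of $W$, interleaved with factors of $S_j$ (producing the vertex weights $(V_j-V_n)^{-1}$) and factors $-S_j$ coming from nested insertions, with the root pinned at the index $j$. Expanding each matrix element of $W$ as $W_{nm}=\sum_{\alpha\in\A}z^\alpha(\mathfrak b_\alpha)_{nm}$ splits every such diagram into a sum over configurations in which each edge $n\to m$ additionally carries a label $\alpha\in\A$; this is precisely the notion of a loop configuration defined above. One then checks termwise that the product of edge weights $(\mathfrak b_{\alpha_s})_{n_{s-1}n_s}$ and vertex weights $(V_j-V_{n})^{-1}$ reproduces $\cont(\cP)$, that the accumulated monomial is $z^{\q(\cP)}$ with $\q(\cP)=\alpha_1+\dots+\alpha_k$, that the $-S_j$ insertion rule gives exactly the sign and the extra factor $-(V_j-V_{n_s})^{-1}$, and that the ``duplicated $n_s$'' convention records that subsequent insertions may be placed on either side of a nested loop. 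The order of $\ep$ matches $|\cP|=k$ because each application of $W$ (each labelled edge) carries one power of $\ep$ while insertion arrows carry none; since every $j$-loop has at least one edge and each insertion strictly increases $|\cP|$, only finitely many $\cP$ have $|\cP|=k$, so the inner sum in \eqref{eq_formal_series} is finite, and its convergence for $|\ep|<\ep_0$ is already furnished by Proposition \ref{prop_analytic_branches}.

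A self-contained derivation is also possible and may be worth recording. Pick a small circle $\gamma_j$ around $V_j$ separating it from the remaining $V_i$ and, for $|\ep|<\ep_0$, from the other branches $\lambda_i(\ep)$, and use $\lambda_j(\ep)=\tfrac{1}{2\pi i}\oint_{\gamma_j}\zeta\,\tr\bigl((\zeta-h_\ep(z))^{-1}\bigr)\,d\zeta$. Insert the Neumann expansion $(\zeta-h_\ep)^{-1}=\sum_{m\ge0}\ep^m(\zeta-V)^{-1}\bigl(W(\zeta-V)^{-1}\bigr)^m$, which converges uniformly on $\gamma_j$ for small $\ep$, write each trace as a sum over closed walks $n_0\to n_1\to\dots\to n_m=n_0$ with edge labels $\alpha_i\in\A$, and evaluate the residue at $\zeta=V_j$. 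Walks that never return to $j$ between their endpoints yield simple loops directly; walks that revisit $j$ create higher-order poles, and the Leibniz rule for the residue redistributes the resulting derivatives so that, after reindexing by the successive return times to $j$, the sum reorganizes exactly into the nested-insertion structure, the signs and the factors $-(V_j-V_{n_s})^{-1}$ emerging from this residue bookkeeping.

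The delicate point in either route is this last one: matching, termwise and with the correct signs, the residue/insertion combinatorics to the loop-configuration weights, including the attachment factors $-(V_j-V_{n_s})^{-1}$ and the multiplicity conventions recorded by the duplicated vertices. This is exactly the content of \cite{KPS}, so for the present purposes it suffices to quote that result and then apply the quasimomentum refinement, whose correctness is transparent since $W$ depends on $z$ only through the monomials $z^\alpha$ multiplying the fixed matrices $\mathfrak b_\alpha$.
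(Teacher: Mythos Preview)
Your proposal is correct and follows essentially the same approach as the paper: the paper itself does not prove Proposition~\ref{prop_rayleigh_series} but simply refers to \cite{KPS}, remarking that the conversion of notation is straightforward and that understanding the notation is the main challenge. Your first and third paragraphs make this conversion explicit (identifying $W(z)=\sum_\alpha z^\alpha\mathfrak b_\alpha$ and refining the \cite{KPS} diagrams by edge-labeling with $\alpha$), which is exactly the intended translation; the contour-integral sketch in your second paragraph is an optional extra that the paper does not include.
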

\begin{remark}
We also refer the reader to \cite{KPS} for a more detailed and formal description of the attachment procedure. For the purpose of this paper, attachments will not play a significant role, since Step 1 of the main technical result \ref{th_main_loops}, essentially, states that the loop configurations of interest for the purposes of that theorem, will be found among those without attachments.
\end{remark}
\subsection{Relation between the graphs $\mathcal G$ and $\Gamma$} Recall that our original goal is to consider operators that arise from Theorem \ref{th_main_graph}, which involves a graph $\Gamma$ described in (p1) -- (p4) in the Introduction. The operator \eqref{eq_he_def} is the rescaled version of the original fiber operator \eqref{eq_h_def}. The graph $\mathcal G$, constructed earlier in this section, is essentially the quotient graph: $\mathcal G=\Gamma/\Z^d$, where in the definition of the quotient graph one also has to consider the edges that connect between different copies of the fundamental domain $\Lambda$. Each such edge of the quotient graph will ``remember'' $\Z^d$ distance between the copies of the fundamental domain whose vertices it was connecting through the {\it quasimomentum}, which is an additional $\Z^d$-valued edge weight function.

More precisely, if $x,y\in \Lambda$ and $\Gamma$ has an edge between $x$ and $y+\alpha$ with weight $\mathfrak a(x,y+\alpha)$, then $\mathcal G$ will have an edge between $x$ and $y$ with weight $\mathfrak a(x,y+\alpha)$ and quasimomentum $\alpha$. Instead of considering $\mathcal G$ being a multi-graph, one could also consider edge weights that are elements of $\C[z_1,z_1^{-1},\ldots,z_d,z_d^{-1}]$, making it more in line with the notation in \eqref{eq_b_def}. However, it will be important to consider contributions from different monomials in these edge weights separately.

Note that $\Gamma$ is connected if and only for every $x,y\in \mathcal G$ and every $\alpha\in \Z^d$, there is a path in $\mathcal G$ between $x$ and $y$ with non-zero weight and quasimomentum $\alpha$. In particular, if $\Gamma$ is connected then $\mathcal G$ is also connected.

We will say that $\mathcal G$ is {\it multi-connected} if there exist $\alpha_1, \alpha_2\in \A$, $\alpha_1\neq \alpha_2$, such that $({\mathfrak b}_{\alpha_1})_{ij}\neq 0$ and $({\mathfrak b}_{\alpha_2})_{ij}\neq 0$. In other words, $\mathcal G$ is multi-connected if there exists a pair of vertices that is connected by two edges of different quasimomenta. If such a pair of vertices does not exist, we will call $\mathcal G$ {\it single-connected}.

\subsection{Convergence of the series \eqref{eq_formal_series}}
In preparation for the proof of the main result, we will summarize some claims regarding the domain of convergence of the series \eqref{eq_formal_series}. For a fixed $j$, denote
\beq
\label{eq_W_def}
W_s:=(V_j-V_s)^{-1},\quad s\in \{1,\ldots,N\}\setminus\{j\}.
\eeq
Recall also \eqref{eq_mb_def} defining $M_{{\mathfrak b}}=\max\{\|{\mathfrak b}_{\alpha}\|\colon \alpha\in \A\}$.
\begin{lemma}
\label{lemma_convergence}
There exists $C=C(\Gamma)=C(d,\#\mathcal A,\#\Lambda)$ such that the number of loop configurations of length $k$ grows at most exponentially in $k$:
\beq
\label{eq_cardinality_p}
\#\{\cP\colon |\cP|=k\}\le C(\Gamma)^k.
\eeq
As a consequence, for any open $Z\subset (\C^*)^d$ with compact closure and any $r>0$, there exists $\varepsilon_0=\varepsilon_0(M_{{\mathfrak b}},Z,r,C(\Gamma))$ such that for $|\ep|<\ep_0$ the series \eqref{eq_formal_series} converges uniformly and absolutely, considered as a Laurent series in $\ep$, $W_1,\ldots,W_N$, $z_1\ldots,z_d$ in the region of $\C^{N+d}$ defined by
$$
|\ep|<\ep_0, \quad z\in Z,\quad |W_s|<r,\quad s\in \{1,\ldots,N\}\setminus \{j\}.
$$
Moreover (as a consequence), the same is true for any combination of  derivatives in the said variables.
\end{lemma}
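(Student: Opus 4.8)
The plan is to prove the counting bound \eqref{eq_cardinality_p} first and then deduce convergence by a straightforward majorization argument. For the counting bound, I would encode each loop configuration of length $k$ as a combinatorial object of bounded local complexity. Observe that a loop configuration is built from a simple $j$-loop by a finite sequence of attachments, and the key structural fact is that the total number of edges equals $k$; since every edge in a simple loop or an attached loop is a genuine edge of $\mathcal G$ (that is, corresponds to a nonzero entry $(\mathfrak b_\alpha)_{im}$), the total footprint has size $k-1$ in the simple case and grows additively under attachment, so a length-$k$ configuration has at most $k$ vertex slots. I would set up a recursion: let $P(k)$ denote the number of loop configurations of length $k$ (for a fixed base vertex $j$; the factor $N$ from the choice of $j$ is harmless). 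Each configuration is obtained by choosing the underlying simple $j$-loop — at each of its at most $k$ steps we choose a target vertex (at most $N$ choices) and an edge quasimomentum (at most $\#\mathcal A$ choices), so there are at most $(N\cdot\#\mathcal A)^k$ simple $j$-loops of length $\le k$ — and then distributing the remaining length among attachments at the (at most $k$) available slots. A clean way to make this precise is to think of the whole configuration as a plane rooted tree with at most $k$ leaves (each leaf an edge), decorated with vertex labels and quasimomenta; the number of such plane trees is a Catalan-type quantity, bounded by $4^k$, and the number of decorations is bounded by $(N\cdot\#\mathcal A)^k$. Multiplying, $\#\{\cP\colon|\cP|=k\}\le (4N\#\mathcal A)^k$, which gives \eqref{eq_cardinality_p} with $C(\Gamma)=4N\#\mathcal A$ depending only on $d$, $\#\mathcal A$, $\#\Lambda=N$ (the dependence on $d$ enters only through $\#\mathcal A\subset\Z^d$ being finite). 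I would present this with enough detail to be convincing but treat the exact constant as unimportant.

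Given the counting bound, convergence of \eqref{eq_formal_series} follows by term-wise majorization. For a loop configuration $\cP$ with $|\cP|=k$, every edge factor $(\mathfrak b_{\alpha_s})_{n_{s-1}n_s}$ is bounded in modulus by $M_{\mathfrak b}$, and the monomial $z^{\q(\cP)}$ is bounded on $Z$ by some constant $C_Z$ raised to the power $k$ (since $\q(\cP)$ is a sum of $k$ elements of the finite set $\mathcal A$, and $|z^\alpha|$ is bounded above and below on the compact closure of $Z$ uniformly for $\alpha\in\mathcal A$). Each vertex factor and each attachment factor is $\pm W_s$ for some $s$, with $|W_s|<r$; a configuration of length $k$ contains exactly $k-1$ such factors counted with multiplicity (one per footprint element), as one checks from the additive rules for $\cont$ and $\foot$. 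Hence $|z^{\q(\cP)}\cont(\cP)|\le C_Z^k M_{\mathfrak b}^k r^{k-1}$, and summing over the at most $C(\Gamma)^k$ configurations of length $k$ gives the bound $\sum_k |\ep|^k C(\Gamma)^k C_Z^k M_{\mathfrak b}^k r^{k-1}$, a geometric series convergent once $|\ep| < \ep_0 := \big(2\, C(\Gamma)\, C_Z\, M_{\mathfrak b}\,\max\{r,1\}\big)^{-1}$, say, and the convergence is uniform and absolute on the stated polydisc in $(\ep, W_1,\ldots,W_N, z)$ because the majorants no longer depend on the specific values of those variables within the region. Treating the series as a Laurent series in $\ep, W_1,\ldots,W_N, z_1,\ldots,z_d$ causes no difficulty: each term is a monomial (up to the bounded edge coefficients) in these variables, the exponents of $\ep$ are nonnegative, and the total degree in each group of variables is controlled by $k$, so absolute convergence of the numerical majorant gives normal convergence of the Laurent series.

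For the ``moreover'' clause about derivatives, I would invoke the standard fact that a Laurent series converging uniformly and absolutely on an open polydisc (more precisely, normally on compact subsets) may be differentiated term by term, with the differentiated series again converging uniformly and absolutely on any slightly smaller polydisc; alternatively, one can absorb the polynomial factors $k^j$ produced by differentiation into a geometric series with a marginally larger $\ep_0$, since $k^j\rho^k$ is summable whenever $\rho<1$. I would state this in one or two sentences and refer to it as routine.

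The main obstacle I anticipate is the bookkeeping in the counting argument: one must argue carefully that a loop configuration of length $k$ really does decompose uniquely as a decorated plane tree with at most $k$ leaves, so that the Catalan-type bound applies. The subtlety is that attachments can be iterated at duplicated vertex slots, so the recursive structure must be set up so that distinct loop configurations correspond to distinct trees; getting the encoding exactly right — and in particular verifying that the number of vertex slots available for attachment never exceeds the current length — is where the care is needed. Everything after the counting bound is soft analysis.
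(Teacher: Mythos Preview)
Your proposal is correct and, for the convergence and derivative clauses, essentially identical to the paper's (Weierstrass $M$-test plus routine termwise differentiation). The one genuine difference is in the counting bound \eqref{eq_cardinality_p}: the paper encodes a loop configuration directly as a linear string of bounded length over a bounded alphabet---at each step one either traverses an edge of $\mathcal G$ or opens/closes an attachment, with the number of options per step bounded by a constant depending only on $\Gamma$---whereas you encode it as a decorated plane rooted tree and invoke a Catalan-type bound. Both are valid; the paper's sequential encoding is shorter and sidesteps exactly the bookkeeping obstacle you flag at the end (uniqueness of the tree encoding and the recursion on attachment slots), at the cost of being terse about why the string length is $O(k)$. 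Your approach makes the structure more explicit, and your verification that a length-$k$ configuration carries exactly $k-1$ vertex factors and at most $k$ edge factors in $\mathcal A$ is a useful detail the paper leaves implicit.
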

\begin{proof}
From the structure of the series, it is easy to see that all statements essentially follow from \eqref{eq_cardinality_p} (for example, using the Weirstrass' $m$-test). Regarding the estimate of the cardinality, note that every loop configuration can be described by a sequence of $k$ choices, where at each step one can either travel along one of the edges of $\mathcal G$, or start or terminate an attached loop whenever it is allowed by the structure of the part. Clearly, the number of choices at each step is bounded by $C(\Gamma)$.
\end{proof}

\begin{remark}
\label{rem_resummation}
In situations such as \cite{KPS}, it is important to consider the terms at each $\varepsilon^k$ as one expression, since the series may become diverging absolutely if one considers separately the terms corresponding to different monomials in $W_s$ or different loop configurations associated to the same monomial. However, in the present setting the smallness of $\varepsilon$ and separation between the values of the potential $V_s$ ensure that the series converges absolutely to the same expression in any of these interpretations.
\end{remark}

In the interpretation in Lemma \ref{lemma_convergence}, one considers the series as the sum over all possible monomials in $\{W_s\}$ (corresponding to the choice of the footprint of the corresponding loop configuration) and all possible values of the quasimomenta, grouping together the terms with the same footprints and quasimomenta. Note that the power of $\varepsilon$ is always equal to the cardinality of the footprint, considered as a multi-set.

As a consequence, one can rewrite the sum \eqref{eq_formal_series} as
\beq
\label{eq_resummation}
\lambda_j=V_j+\sum_{k=1}^{+\infty}\sum\limits_{\mathfrak f\in \mathcal F_{j,k}}\sum_{\alpha\in \Z^d}\ep^k W^{\mathfrak f}z^{\alpha}\totalcont(\mathfrak f,\alpha),
\eeq
where:
\begin{itemize}
    \item $\mathcal F_{j,k}$ is the set of possible footprints among loop configurations of length $k$ contributing to $\lambda_j$; in other words, sub-multisets of $\{1,\ldots,N\}\setminus\{j\}$ of cardinality $k$.
    \item $W^f$ is the monomial in $\{W_s\}:=\{W_1,\ldots,W_N\}\setminus \{W_j\}$, whose multi-power is equal to $\mathfrak f$.
    \item $\totalcont(\mathfrak f,\alpha)$ is the total contribution of loop configurations with footprint $\mathfrak f$ and quasimomentum $\alpha$, after factoring out $z^{\alpha}$ and $W^{\mathfrak f}$. In other words,
    $$
    \totalcont(\mathfrak f,\alpha)=\prod_{s\in \mathfrak f}(V_j-V_s)\sum_{\foot(\cP)=\mathfrak f,\,\quasi(\cP)=\alpha}\cont(\cP),
    $$
    where the product over the multi-set $\mathfrak f$ is considered counting multiplicity. Note that, similarly to $\cont(\cdot)$, the object $\totalcont$ depends on $j$ through the allowed range of values of $\mathfrak f$ and structures of loop configurations.
    \item For each $\mathfrak f\in \mathcal{F}_{j,k}$, the summation in $\alpha$ is finite. One can check that, possibly after modifying the value of $C(\Gamma)$, one can assume that $|\alpha|\le (C(\Gamma))^k$.
    \item The triple sum converges uniformly and absolutely on the set described in Lemma \ref{lemma_convergence}.
\end{itemize}

\subsection{Sufficient conditions for non-constant band functions} In this section, we will formalize the following two intuitively plausible observations:
\begin{itemize}
    \item For $\varepsilon$ small enough, one cannot have a complete cancellation between two terms in \eqref{eq_resummation} with different values of $k$, unless both terms are equal to $0$.
    \item In the problems involving generic potentials, $\{W_s\}$ can be treated as independent formal variables.
\end{itemize}
\begin{lemma}
\label{lemma_separate_powers}
Under the assumptions of Propositions $\ref{prop_analytic_branches}$, $\ref{prop_rayleigh_series}$, and Lemma $\ref{lemma_convergence}$, fix $\{W_s\}$. There exists $\ep_0=\ep_0(j,\{W_s\},r,\A,Z,M_{{\mathfrak b}})>0$ such that, for $|\ep|<\ep_0$ the following holds: the function $\lambda_j$ defined by the series \eqref{eq_formal_series} is constant in $z$ if and only if the coefficient at each $\ep^k$ is constant in $z$.
\end{lemma}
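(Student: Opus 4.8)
The plan is to exploit that the series $\lambda_j = V_j + \sum_k \ep^k c_k(z)$ (with $c_k(z) := \sum_{|\cP|=k} z^{\q(\cP)}\cont(\cP)$, a Laurent polynomial in $z$ with fixed $\{W_s\}$) converges uniformly and absolutely for $|\ep|<\ep_0$ and $z\in Z$, as guaranteed by Lemma \ref{lemma_convergence}. One direction is trivial: if each $c_k$ is constant in $z$, then $\lambda_j$ is a sum of $z$-constant functions and hence constant in $z$. For the converse, suppose $\lambda_j$ is constant in $z$ but some $c_k$ is non-constant. Let $k_0$ be the smallest such $k$. The idea is that for $\ep$ small the term $\ep^{k_0} c_{k_0}(z)$ dominates the tail $\sum_{k>k_0}\ep^k c_k(z)$, so the $z$-dependence of $c_{k_0}$ cannot be cancelled.

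To make this precise I would argue as follows. Since $\lambda_j$ is constant in $z$, for any two points $z, z'\in Z$ we have $\sum_{k\ge 1}\ep^k\bigl(c_k(z)-c_k(z')\bigr)=0$. By minimality of $k_0$, the first $k_0-1$ terms vanish identically, so
\[
\ep^{k_0}\bigl(c_{k_0}(z)-c_{k_0}(z')\bigr) = -\sum_{k>k_0}\ep^k\bigl(c_k(z)-c_k(z')\bigr).
\]
Pick $z_0, z_0'\in Z$ with $\delta := |c_{k_0}(z_0)-c_{k_0}(z_0')| > 0$ (possible since $c_{k_0}$ is non-constant and $Z$ is open). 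Using the exponential bound $\#\{\cP : |\cP|=k\}\le C(\Gamma)^k$ together with the bounds $\|\mathfrak b_\alpha\|\le M_{\mathfrak b}$, $|W_s|<r$ for $s\ne j$, and $|z^\alpha|$ bounded on the compact closure of $Z$, one gets a uniform estimate $|c_k(z)|\le A\cdot D^k$ for all $z\in Z$ and some constants $A, D$ depending only on the stated data. Hence the right-hand side above is bounded in absolute value by $2A\sum_{k>k_0}(|\ep| D)^k = 2A (|\ep| D)^{k_0+1}/(1-|\ep| D)$ once $|\ep| D < 1$. Comparing with the left-hand side at $z=z_0$, $z'=z_0'$ yields
\[
|\ep|^{k_0}\,\delta \;\le\; \frac{2A\,(|\ep| D)^{k_0+1}}{1-|\ep| D},
\]
i.e. $\delta \le \dfrac{2A D^{k_0+1}\,|\ep|}{1-|\ep| D}$, which is false for $|\ep|$ small enough (shrinking $\ep_0$ further, depending on $j$, $\{W_s\}$, $r$, $\A$, $Z$, $M_{\mathfrak b}$, and $\delta$ — but $\delta$ and $k_0$ are themselves determined by this same data). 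This contradiction shows no such $k_0$ exists, so every $c_k$ is constant in $z$, completing the proof.

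The only mildly delicate point is the dependence of the final $\ep_0$ on the quantity $\delta$, which a priori depends on $c_{k_0}$ and hence on $\{W_s\}$; but since the statement already allows $\ep_0$ to depend on $\{W_s\}$ (and $j$), this is not an obstacle. One should also note that $c_{k_0}(z)$ being a non-constant Laurent polynomial on the open set $Z$ genuinely takes at least two distinct values there, so $\delta>0$ is available. The main work, such as it is, is bookkeeping the uniform geometric bound on $|c_k(z)|$ over $Z$ — exactly the content already extracted in Lemma \ref{lemma_convergence} and the resummation discussion — so the argument is essentially a dominated-term comparison.
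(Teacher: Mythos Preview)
Your proof is correct and follows essentially the same approach as the paper's: a dominant-term comparison showing that the first non-constant coefficient $c_{k_0}$ cannot be cancelled by the exponentially controlled tail for small $\ep$. The only cosmetic difference is that the paper compares the gradient $\nabla_z c_k$ at a single point (using the bound \eqref{eq_compare_derivatives} on derivatives) while you compare values $c_k(z_0)-c_k(z_0')$ at two points; both variants work and rely on the same geometric tail estimate from Lemma~\ref{lemma_convergence}.
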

\begin{proof}
Clearly, if all terms are constant in $z$, then $\lambda_j$ is also constant in $z$.

In order to establish the reverse direction, let $z_0\in (C^*)^d$ and fix a small neighborhood $C\ni z_0$ with compact closure $\overline{C}\subset (C^*)^d$. In view of Lemma \ref{lemma_convergence}, 
one can find a large number $B=B(r,\A,Z,M_{{\mathfrak b}},C)>0$ such that
\beq
\label{eq_compare_derivatives}
\l|\sum\limits_{\cP\colon |\cP|=k}z^{\q(\cP)}\cont(\cP)\r|+\l|\nabla_z\sum\limits_{\cP\colon |\cP|=k}z^{\q(\cP)}\cont(\cP)\r|\le B^k,\quad \forall z\in \overline{C}.
\eeq
In particular, the series converges uniformly with its derivatives in $z$ for $z\in C$.

Suppose that the coefficient at at least one power of $\ep$ in \eqref{eq_formal_series} is not constant, and let $k$ be the smallest power with this property. Since the coefficient is a Laurent polynominal in $z$, we have, for some $z\in C$,
\beq
\label{eq_violate}
\l|\nabla_z\sum\limits_{\cP\colon |\cP|=k}z^{\q(\cP)}\cont(\cP)\r|>c=c(V_1,\ldots,V_N,r,\A,Z,M_{{\mathfrak b}},C)>0.
\eeq
Now, one can choose $\ep_0>0$ satisfying
$$
\ep_0 B^{k+1}+\ep_0^2 B^{k+2}+\ldots<c/2,
$$
guaranteeing that the contribution from the derivatives of the remaining terms of the series would not completely cancel out \eqref{eq_violate}.
\end{proof}
Instead of considering the whole series \eqref{eq_formal_series}, Lemma \ref{lemma_separate_powers} now allows us to look for a non-constant term and then choose a sufficiently small $\ep$, which can later be converted into existence of the potential with no flat bands by rescaling. We will now deal with the second claim from the beginning of the subsection, regarding generic values of $\{W_s\}$.

\begin{lemma}
\label{lemma_fixed_footprint}
Fix $j$ and consider the series \eqref{eq_formal_series}. The following claims hold:
\begin{enumerate}
    \item Suppose that the term $\sum\limits_{\cP\colon |\cP|=k}z^{\q(\cP)}\cont(\cP)$ is constant in $z$ for all choices of $\{W_s\}$. Consider the said term as a polynomial in $\{W_s\}$. Then, the coefficient at each monomial in $\{W_s\}$ is constant in $z$.
    \item The set of values of $\{W_s\}$ such that each term of \eqref{eq_formal_series} is constant in $z$, is an affine algebraic sub-variety of $\C^{N-1}$.
\end{enumerate}
\end{lemma}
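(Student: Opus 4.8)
The plan is to exploit the fact that the individual $\varepsilon^k$-coefficients in \eqref{eq_formal_series}, regarded as functions of the quasimomentum $z$ and of the variables $W=(W_s)_{s\neq j}$ of \eqref{eq_W_def}, have a transparent ``bi-monomial'' structure, so that the hypotheses and the conclusions can be checked one monomial at a time. \emph{Step 1 (shape of the contributions).} First I would record that every loop configuration $\cP$ satisfies
\[
\cont(\cP)=c(\cP)\prod_{s\in\foot(\cP)}W_s ,
\]
where the product over the multiset $\foot(\cP)\subset\{1,\dots,N\}\setminus\{j\}$ is taken with multiplicity, and $c(\cP)\in\C$ depends only on the matrices $\{\mathfrak b_\alpha\}$ and on the combinatorics of $\cP$ (a product of entries $(\mathfrak b_{\alpha_i})_{n_{i-1}n_i}$ times a sign), involving neither $z$ nor $V$. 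This is immediate for a simple loop from the definition of $\cont$, using $(V_j-V_n)^{-1}=W_n$, and it propagates through the attachment rule $\cont(\cP')=-(V_j-V_{n_s})^{-1}\cont(\cP)\cont(\cQ)=-W_{n_s}\,\cont(\cP)\,\cont(\cQ)$ by induction on the number of attachments. Since by Lemma \ref{lemma_convergence} there are only finitely many loop configurations of a given length, for each $k$ we obtain
\[
T_k(z,W):=\sum_{\cP\colon|\cP|=k}z^{\q(\cP)}\cont(\cP)=\sum_{\alpha\in\Z^d}z^{\alpha}\,c_{k,\alpha}(W),\qquad c_{k,\alpha}(W):=\sum_{\cP\colon|\cP|=k,\ \q(\cP)=\alpha}c(\cP)\prod_{s\in\foot(\cP)}W_s ,
\]
where all but finitely many $c_{k,\alpha}$ vanish, and each $c_{k,\alpha}$ is a genuine polynomial in the $N-1$ variables $W$. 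Grouping instead according to the footprint, the coefficient of the monomial $W^{\mathfrak f}$ in $T_k$, regarded as a polynomial in $W$, is the Laurent polynomial $g_{k,\mathfrak f}(z)=\sum_{\alpha}z^{\alpha}\totalcont(\mathfrak f,\alpha)$; this is the $\varepsilon^k$-term of the re-summed series \eqref{eq_resummation}.

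\emph{Step 2 (proof of (1)).} Fix $k$ and assume $T_k(z,W)$ is constant in $z$ for all choices of $\{W_s\}$. As $V$ ranges over potentials with pairwise distinct entries, the vector $W=((V_j-V_s)^{-1})_{s\neq j}$ ranges over a nonempty Zariski-open, hence Zariski-dense, subset $U\subset\C^{N-1}$. For any two fixed points $z,z'\in Z$, the polynomial $\sum_{\mathfrak f}\bigl(g_{k,\mathfrak f}(z)-g_{k,\mathfrak f}(z')\bigr)W^{\mathfrak f}$ vanishes on $U$, hence identically on $\C^{N-1}$; since distinct multisets $\mathfrak f$ produce distinct, hence linearly independent, monomials $W^{\mathfrak f}$, each coefficient $g_{k,\mathfrak f}(z)-g_{k,\mathfrak f}(z')$ vanishes. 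As $z,z'$ were arbitrary, every $g_{k,\mathfrak f}$ is constant in $z$, which is exactly the claim, $g_{k,\mathfrak f}$ being the coefficient of $W^{\mathfrak f}$ in $T_k$.

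\emph{Step 3 (proof of (2)).} The Laurent monomials $\{z^{\alpha}\colon\alpha\in\Z^d\}$ are $\C$-linearly independent on $(\C^*)^d$, so for a fixed $W_0\in\C^{N-1}$ the function $z\mapsto T_k(z,W_0)$ is constant precisely when $c_{k,\alpha}(W_0)=0$ for every $\alpha\neq0$. Therefore the set in question equals
\[
\bigl\{W_0\in\C^{N-1}\colon T_k(z,W_0)\text{ is constant in }z\text{ for every }k\bigr\}=\bigcap_{k\geq1}\ \bigcap_{\alpha\neq0}\bigl\{W_0\colon c_{k,\alpha}(W_0)=0\bigr\},
\]
which is the common zero locus in $\C^{N-1}$ of the family of polynomials $\{c_{k,\alpha}\colon k\geq1,\ \alpha\neq0\}$, hence an affine algebraic sub-variety (by the Hilbert basis theorem it is even cut out by finitely many of these, though that is not needed here).

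\emph{Main obstacle.} There is no real obstacle in this lemma: once the monomial structure of Step 1 is written down, the rest is linear algebra over $\C(z)$ and $\C[W]$. The only points needing care are keeping the ``$z$-variables'' genuinely separate from the ``$W$-variables'' — one uses linear independence of the $z^{\alpha}$ to extract the phrase ``constant in $z$'', and linear independence of the $W^{\mathfrak f}$ together with Zariski-density of the admissible $W$-locus to separate the footprints — and invoking the finiteness in Lemma \ref{lemma_convergence} so that the $c_{k,\alpha}$ are honest polynomials and the intersection makes sense. The genuine difficulty is deferred: a priori the sub-variety produced in (2) could be all of $\C^{N-1}$, and showing that it is \emph{proper} — equivalently, exhibiting some $k$ and some $\alpha\neq0$ with $c_{k,\alpha}\not\equiv0$ (equivalently $\totalcont(\mathfrak f,\alpha)\neq0$ for some $\mathfrak f$) — is precisely the content of Section 3.
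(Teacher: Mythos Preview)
Your proposal is correct and follows essentially the same approach as the paper: both arguments reduce (1) to the linear independence of the monomials $W^{\mathfrak f}$ (equivalently, to the fact that a polynomial in $\{W_s\}$ with Laurent-polynomial coefficients in $z$ vanishes identically iff each coefficient does), and reduce (2) to observing that constancy of each $\varepsilon^k$-term amounts to the vanishing of finitely many polynomials $c_{k,\alpha}(W)$, so that the total intersection over $k$ is still an affine variety (the paper cites \cite[Chapter~2, \S5]{CLOIVA}, you invoke the Hilbert basis theorem). Your write-up is somewhat more careful than the paper's in one respect: you explicitly handle the fact that the admissible $W$-values form only a Zariski-dense subset of $\C^{N-1}$ rather than all of it, whereas the paper tacitly treats the $W_s$ as free variables; this extra care is harmless and arguably an improvement.
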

\begin{proof}
The above lemma is a restatement of the well-known fact that a Laurent polynomial $$p\in \C[z_1,z_1^{-1},\ldots,z_d,z_d^{-1},\{W_s\}]$$ identically vanishes in $z_1,\ldots,z_d$ if and only if the coefficient at every monomial in $\{W_s\}$ (which is a Laurent polynomial in the variables $z$) identically vanishes. The latter is true for polynomials over any field of characteristic zero. Clearly, the converse is also true.

In the second claim, it is easy to see that identical vanishing of each individual term of \eqref{eq_formal_series} reduces to finitely many algebraic equations in the variables $\{W_s\}$, which makes the set under consideration a countable intersection of affine algebraic varieties. It is well known (see, for example, \cite[Chapter 2, \S 5]{CLOIVA}) that such intersection is still a set of the same kind.
\end{proof}
With the above preparations, we are ready to proceed with two last steps of the reductions: from a non-zero constant coefficient in the series \eqref{eq_resummation} to an open subset of potentials with no flat bands, and from an open subset of potentials to a generic (dense open/Zariski open) subset of potentials.
\begin{lemma}
\label{lemma_make_open}
Suppose that the assumptions of Lemma $\ref{lemma_convergence}$ are satisfied for all $j$. Suppose also that, for each $j$, there exists a multi-set $\mathfrak f_j\in \mathcal{F}_{j,k}$ for some $k\ge 1$ and a quasimomentum $\alpha_j\neq 0$ such that $\totalcont(\mathfrak f_j,\alpha_j)\neq 0$. Then, there exists a non-empty open subset of potentials on which the series \eqref{eq_formal_series}, \eqref{eq_resummation} converge to non-constant functions in $z$.
\end{lemma}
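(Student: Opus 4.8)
The plan is to combine the separation-of-powers statement (Lemma~\ref{lemma_separate_powers}), the genericity of the variables $\{W_s\}$ (Lemma~\ref{lemma_fixed_footprint}), and the absolute convergence of the triple sum \eqref{eq_resummation} (Lemma~\ref{lemma_convergence} and the discussion following it) to conclude that, for a suitable choice of the potential values, each branch $\lambda_j$ is genuinely non-constant in $z$. The hypothesis gives, for each $j$, a footprint $\mathfrak f_j\in\mathcal F_{j,k_j}$ and a nonzero quasimomentum $\alpha_j$ with $\totalcont(\mathfrak f_j,\alpha_j)\neq 0$; the key observation is that $\totalcont(\mathfrak f_j,\alpha_j)$ is a fixed rational expression in $V_1,\dots,V_N$ (depending only on the $\mathfrak b_\alpha$'s and the combinatorics of loop configurations with that footprint), so the locus where it vanishes is a proper algebraic subset, hence avoidable on an open dense set of potentials.

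First I would fix, for each $j$, the coefficient of $\ep^{k_j}$ in \eqref{eq_formal_series}, and rewrite it via the resummation \eqref{eq_resummation} as $\sum_{\mathfrak f\in\mathcal F_{j,k_j}}\sum_{\alpha}W^{\mathfrak f}z^\alpha\totalcont(\mathfrak f,\alpha)$. Treating the $\{W_s\}$ as formal variables (as justified by Lemma~\ref{lemma_fixed_footprint}), the presence of a single pair $(\mathfrak f_j,\alpha_j)$ with $\alpha_j\neq0$ and $\totalcont(\mathfrak f_j,\alpha_j)\neq0$ forces the coefficient of the monomial $W^{\mathfrak f_j}$ in this $\ep^{k_j}$-coefficient to be a nonzero Laurent polynomial in $z$ (it contains the nonzero monomial $z^{\alpha_j}\totalcont(\mathfrak f_j,\alpha_j)$, and distinct $\alpha$'s contribute distinct monomials, so no cancellation can kill it). Consequently the $\ep^{k_j}$-coefficient itself, as a polynomial in $\{W_s\}$ with Laurent-polynomial-in-$z$ coefficients, is not constant in $z$ once $\{W_s\}$ is chosen generically. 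Explicitly: the set of $\{W_s\}$ making every term of \eqref{eq_formal_series} constant in $z$ is a proper affine algebraic subvariety of $\C^{N-1}$ by Lemma~\ref{lemma_fixed_footprint}(2) (proper because the $\ep^{k_j}$-coefficient is genuinely non-constant for generic $\{W_s\}$), so its complement contains a non-empty open set.

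Next I would pass from $\{W_s\}$ back to the potential values $(V_1,\dots,V_N)$. Recall $W_s=(V_j-V_s)^{-1}$; for a fixed $j$ this is a birational change of coordinates on the open set where the $V$'s are distinct, so an open dense condition on $\{W_s\}$ pulls back to an open dense condition on $(V_1,\dots,V_N)$ within $\mathcal B_r$. Intersecting over the finitely many $j\in\{1,\dots,N\}$ the open sets on which the $\ep^{k_j}$-coefficient of $\lambda_j$ is non-constant in $z$, and further intersecting with $\mathcal B_r$, produces a non-empty open set $\mathcal U$ of potentials. Finally, for potentials in $\mathcal U$, Lemma~\ref{lemma_separate_powers} applies: since for each $j$ the $\ep^{k_j}$-coefficient of the series for $\lambda_j$ is non-constant in $z$, there is $\ep_0>0$ (depending on the chosen $\{W_s\}$ and the fixed data $r,\A,Z,M_{\mathfrak b}$) such that for $|\ep|<\ep_0$ the full series \eqref{eq_formal_series}, equivalently \eqref{eq_resummation}, is non-constant in $z$; shrinking $\mathcal U$ slightly if necessary so that a single $\ep$ works uniformly (possible because the relevant lower bounds \eqref{eq_violate} are locally uniform in the $V$'s), we obtain the desired non-empty open subset of potentials on which every $\lambda_j(\cdot)$ is non-constant in $z$.

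The main obstacle I expect is the bookkeeping in the first step: one must be careful that the hypothesis $\totalcont(\mathfrak f_j,\alpha_j)\neq0$ really does survive into a non-vanishing statement about the $\ep^{k_j}$-coefficient as a function of $z$, i.e.\ that grouping by footprint and then by quasimomentum is legitimate and that distinct quasimomenta $\alpha$ cannot conspire to cancel — this is where one invokes that a Laurent polynomial in $z$ is zero iff all its monomial coefficients vanish, together with the finiteness of the $\alpha$-sum noted after \eqref{eq_resummation}. The remaining steps — the birational change of variables and the quantitative choice of $\ep_0$ — are routine given Lemmas~\ref{lemma_separate_powers} and~\ref{lemma_fixed_footprint}.
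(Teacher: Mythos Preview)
Your proposal is correct and follows essentially the same route as the paper's proof: use the hypothesis to see that the coefficient at $W^{\mathfrak f_j}$ in the $\ep^{k_j}$-term is a non-constant Laurent polynomial in $z$, invoke Lemma~\ref{lemma_fixed_footprint} to make the bad set of $\{W_s\}$ a proper subvariety, translate back to potentials and intersect over $j$ inside $\mathcal B_r$, then apply Lemma~\ref{lemma_separate_powers} and a continuity argument to obtain an open neighborhood. The only cosmetic difference is in the last step, where the paper phrases the passage to a neighborhood via continuity of $\nabla_z\lambda_j$ in $V$ (from Lemma~\ref{lemma_convergence}) rather than via local uniformity of the lower bound \eqref{eq_violate}; these are equivalent.
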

\begin{proof}
For each $j$, the assumptions imply that the coefficient of the series \eqref{eq_resummation} at $\ep^k W^{\mathfrak f_j}$, which is a Laurent polynomial in $z$, is not identically zero. From Claim 1 of Lemma \ref{lemma_fixed_footprint}, it follows that there is at least one choice of values of $\{W_s\}$ such that the whole coefficient of the same series at $\ep^k$ is not identically zero. Note that, a priori, one cannot guarantee that these values will be associated to a potential within the range of convergence of the series. However, Claim 2 of Lemma \ref{lemma_fixed_footprint} implies now that the affine variety of ``bad'' potentials, constructed in that claim, is a proper sub-variety of $\C^{N-1}$. In other words, the ``good'' property now holds for almost every potential.

The above consideration can be applied to each $j$, which will produce an open and dense subset of values $\{W_s\}$, each of which will be associated to some open and dense subset of potentials $(V_1,\ldots,V_N)\in \C^N$. By intersecting these subsets, we obtain an open dense subset of potentials such that the associated values $\{W_s\}$ satisfy the non-constancy properties simultaneously. Since the set $\mathcal B_r$ constructed in \eqref{eq_Br_def} is also open, one can find a potential $(V_1,\ldots,V_N)\in \mathcal B_r$ with the same property.

The above preparations now allow us to apply Lemma \ref{lemma_separate_powers} for each $j$. By choosing the smallest among the values $\ep_0$ from that lemma, we ultimately arrive to a potential for which all $\lambda_j$ are defined by the converging power series \eqref{eq_formal_series}, \eqref{eq_resummation} and are non-constant in $z$. Finally, let us note that, from Lemma \ref{lemma_convergence}, the derivatives in $z$ of the constructed $\lambda_j$ are continuous in $\{W_s\}$ and therefore in $(V_1,\ldots,V_N)$. As a consequence, the same statement is also true in a small neighborhood of the constructed potential.
\end{proof}

The last step will involve some additional considerations from algebraic geometry. The key result is the following proposition from elimination theory. Recall that an affine algebraic variety is the zero set of a finite collection of polynomials, and Zariski closure of any subset $S\subset \C^N$ is the smallest affine algebraic variety containing $S$. As mentioned earlier in the reference to \cite[Chapter 2, \S 5]{CLOIVA}, any intersection of affine algebraic varieties is still an affine algebraic variety, therefore Zariski closure is well-defined.
\begin{prop}
\label{prop_projection_lemma}
Let $\pi\colon \C^{N+1}=\C\times \C^{N}\to \C^N$ be the projection onto the last $N$ coordinates. Let $S\subset \C^{N+1}$ be an affine algebraic variety, and suppose that $\C^N\setminus\pi(S)$ contains an open subset of $\C^N$ (in the standard analytic topology). Then $\pi(S)$ is contained in a proper affine algebraic sub-variety of $\C^N$.
\end{prop}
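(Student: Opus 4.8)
The plan is to reduce the statement to a standard fact from elimination theory, namely the Tarski--Seidenberg / projection theorem in its algebraic-geometric form over $\C$. Recall that if $S\subset \C^{N+1}$ is an affine algebraic variety, then its image $\pi(S)$ under a coordinate projection is a \emph{constructible} set: by the Chevalley theorem, $\pi(S)$ is a finite union of locally closed sets, i.e. $\pi(S)=\bigcup_{i}(U_i\cap Z_i)$ with $Z_i$ affine varieties and $U_i$ Zariski-open. In particular the Zariski closure $\overline{\pi(S)}^{\,\mathrm{Zar}}$ is a genuine affine algebraic variety, and $\pi(S)$ differs from it only by a set contained in a proper subvariety of $\overline{\pi(S)}^{\,\mathrm{Zar}}$.

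First I would argue by contradiction: suppose $\pi(S)$ is \emph{not} contained in any proper affine algebraic subvariety of $\C^N$. Then its Zariski closure must be all of $\C^N$, since $\C^N$ is irreducible (as $\C[V_1,\ldots,V_N]$ is an integral domain) and any proper Zariski-closed subset is a proper subvariety. So $\overline{\pi(S)}^{\,\mathrm{Zar}}=\C^N$. Combining this with the constructibility of $\pi(S)$, the complement $\C^N\setminus\pi(S)$ is contained in a proper Zariski-closed subset $Y\subsetneq\C^N$ (this is exactly the statement that a constructible set dense in an irreducible variety contains a nonempty Zariski-open subset of it).

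Next I would invoke the comparison between the Zariski and the analytic topology on $\C^N$: a proper Zariski-closed subset $Y\subsetneq\C^N$ is the zero set of a nonzero polynomial $p$, hence is a proper analytic subvariety and in particular has empty interior in the standard (analytic) topology — its complement is analytically open and dense. Therefore $\C^N\setminus\pi(S)\subseteq Y$ has empty analytic interior, i.e. $\C^N\setminus\pi(S)$ contains no nonempty open subset of $\C^N$. This directly contradicts the hypothesis that $\C^N\setminus\pi(S)$ contains a standard-open subset. Hence the assumption was false, and $\pi(S)$ is contained in a proper affine algebraic subvariety of $\C^N$, as claimed.

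The main obstacle — and the only nontrivial ingredient — is the Chevalley constructibility theorem for the image of a variety under a projection, together with the fact that a constructible set which is Zariski-dense in an irreducible variety contains a nonempty Zariski-open subset; these are standard but genuinely require the machinery of elimination theory (resultants, or Noetherian induction). Everything else is a routine unwinding of irreducibility of $\C^N$ and the coarseness of the Zariski topology relative to the analytic one. One could alternatively give a slightly more hands-on argument using resultants to eliminate the first variable directly, but appealing to Chevalley's theorem (see, e.g., \cite[Chapter 3]{CLOIVA}) is cleaner and suffices.
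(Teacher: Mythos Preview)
Your proof is correct and follows essentially the same route as the paper's own argument: both argue by contradiction, deduce that $\overline{\pi(S)}^{\mathrm{Zar}}=\C^N$, invoke an elimination-theoretic result (you cite Chevalley's constructibility theorem, the paper cites the closure theorem of \cite[Section 3, \S 2, Theorem 7]{CLOIVA}) to conclude that $\C^N\setminus\pi(S)$ lies in a proper subvariety, and then obtain a contradiction from the analytic-open hypothesis. The two cited results are equivalent for this purpose, so there is no substantive difference.
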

\begin{proof}
The result follows almost directly from the closure theorem, see \cite[Section 3, \S 2, Theorem 7]{CLOIVA}, note also the reference to Chapter 4 in the proof of the said theorem. Suppose $\pi(S)$ is not contained in any proper affine subvariety of $\C^N$. Then the Zariski closure $\overline{\pi(S)}^{Zar}$ of $\pi(S)$ must be equal to $\C^N$. However, the closure theorem states that there is a proper affine algebraic variety $W\subset \C^N$ satisfying 
$$
\overline{\pi(S)}^{Zar}\setminus W=\C^N\setminus W\subset \pi(S),\quad \text{which implies}\quad W\supset \C^N\setminus \pi(S).
$$
By the assumption, the latter set contains an open subset in the analytic topology, whose Zariski closure must be equal to $\C^N$. Since $W$ is closed, one must also have $W=\C^N$, which contradicts the earlier conclusion that $W$ is a proper subset.
\end{proof}
\begin{cor}
\label{cor_generic_flat_bands}
Under the assumptions of Theorem $\ref{th_main_graph}$, suppose that an open (in the analytic topology) subset of potentials $(V_1,\ldots,V_N)$ does not have flat bands. Then the set of the potentials that have flat bands is contained in a proper algebraic sub-variety of $\C^N$.
\end{cor}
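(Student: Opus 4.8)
The plan is to combine Proposition \ref{prop_flat_equivalent} with the elimination-theoretic Proposition \ref{prop_projection_lemma}. First I would observe that, by the characterization in Proposition \ref{prop_flat_equivalent}, a potential $(V_1,\dots,V_N)$ produces a flat band precisely when there exists $E\in\C$ that is an eigenvalue of $h(z)$ for all $z\in(\C^*)^d$; equivalently, $\det(h(z)-E)\equiv 0$ as a Laurent polynomial in $z$. The key point is to encode this ``for all $z$'' condition algebraically in the variables $(E,V_1,\dots,V_N)$. Expanding $\det(h(z)-E)$ and collecting it as a Laurent polynomial in $z_1,\dots,z_d$, its coefficients are polynomials in $E$ and $V_1,\dots,V_N$ (with fixed numerical coefficients coming from the entries of $\mathfrak b_\alpha$); the polynomial vanishes identically in $z$ if and only if every one of these finitely many coefficient-polynomials vanishes. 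Thus the set
$$
S:=\{(E,V_1,\dots,V_N)\in\C^{N+1}\colon E\text{ is an eigenvalue of }h(z)\text{ for all }z\in(\C^*)^d\}
$$
is an affine algebraic variety in $\C^{N+1}$.

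Next, with $\pi\colon\C^{N+1}=\C\times\C^N\to\C^N$ the projection onto the $(V_1,\dots,V_N)$-coordinates, Proposition \ref{prop_flat_equivalent} shows that $\pi(S)$ is exactly the set of potentials that have a flat band at some energy. The hypothesis of the corollary says that $\C^N\setminus\pi(S)$ contains a non-empty open subset (in the analytic topology), namely the given open set of potentials without flat bands. Proposition \ref{prop_projection_lemma} then applies verbatim and yields that $\pi(S)$ is contained in a proper affine algebraic sub-variety of $\C^N$, which is the conclusion.

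The only genuine content to check is the first step: that $S$ is cut out by polynomial (not merely analytic) equations in $(E,V)$, and that $\pi(S)$ really coincides with the flat-band locus rather than being larger or smaller. Both follow from Proposition \ref{prop_flat_equivalent}, equivalence (1)$\Leftrightarrow$(4), together with the elementary fact (already used in Lemma \ref{lemma_fixed_footprint}) that a Laurent polynomial in $z$ vanishes identically iff all its coefficients vanish; here the coefficients are themselves polynomials in $E$ and the $V_s$, since $h(z)-E$ depends polynomially on $E$ and the $V_s$ and the matrices $\mathfrak b_\alpha$ are fixed. I do not expect a serious obstacle — this is a bookkeeping argument packaging the earlier propositions — but care is needed to state the algebraic encoding of ``$\det(h(z)-E)\equiv 0$'' cleanly so that Proposition \ref{prop_projection_lemma} can be invoked with $S$ an honest affine variety.
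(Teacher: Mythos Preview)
Your proposal is correct and follows essentially the same approach as the paper: encode the flat-band condition as the simultaneous vanishing of the coefficients of $\det(h(z)-E)$ viewed as a Laurent polynomial in $z$, obtaining an affine variety $S\subset\C^{N+1}$, and then apply Proposition~\ref{prop_projection_lemma} to the projection $\pi(S)$. Your phrasing is in fact slightly more precise than the paper's, which refers to ``non-constant monomials'' where all coefficients (including the constant one) must vanish for $E$ to be an eigenvalue for every $z$.
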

\begin{proof}
Clearly, the set 
$$
\{(E,V_1,\ldots,V_N)\in \C^{N+1}\colon E\text{ is a flat band for }H\}
$$
is an affine algebraic subset of $\C^{N+1}$, since it can be described by the vanishing of all coefficients of $\det(h(z)-E)$ at non-constant monomials, which leads to finitely many polynomial equations involving $E$ and $V_1,\ldots,V_N$. The rest follows from Proposition \ref{prop_projection_lemma}.
\end{proof}
We finish the section with its main result, which is the combination of two implications stated in Lemma \ref{lemma_make_open} and Corollary \ref{cor_generic_flat_bands}.
\begin{cor}
\label{cor_main_reduction}
Under the assumptions of Theorem $\ref{th_main_graph}$, suppose that, for each $j$, there exists a multi-set $\mathfrak f_j\in \mathcal{F}_{j,k}$ for some $k\ge 1$ and a quasimomentum $\alpha_j\neq 0$ such that $\totalcont(\mathfrak f_j,\alpha_j)\neq 0$. Then the conclusion of $\ref{th_main_graph}$ holds.    
\end{cor}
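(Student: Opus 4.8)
The statement is a bookkeeping corollary: it stitches Lemma~\ref{lemma_make_open} together with Corollary~\ref{cor_generic_flat_bands}, bridged by an elementary rescaling, so the plan is simply to carry out these three steps in order. There is no genuinely new idea required, and the only point that needs any care is the rescaling bookkeeping and the verification that the ``soft'' analytic facts of the previous subsections really apply to the rescaled family $h_{\ep}$ and not merely to the literal operator of the introduction.

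First I would feed the hypothesis at hand --- namely that for every $j$ there is a footprint $\mathfrak f_j\in\mathcal F_{j,k}$ and a nonzero quasimomentum $\alpha_j$ with $\totalcont(\mathfrak f_j,\alpha_j)\ne 0$ --- directly into Lemma~\ref{lemma_make_open}. This produces a nonempty open set $U\subset\C^N$ of potentials and, implicitly, a fixed small $\ep$ such that for every $j$ the analytic branch $\lambda_j(z)$ of the rescaled fiber operator $h_{\ep}(z)$ from \eqref{eq_he_def} is a non-constant function of $z$ on the auxiliary region $Z$. I would fix $Z$ at the outset to be a small ball with compact closure in $(\C^*)^d$, so that in particular $Z$ is connected.

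Second I would turn ``every branch is non-constant'' into ``the family $h_{\ep}(\cdot)$ has no flat band''. For $z\in Z$ the numbers $\lambda_1(z),\dots,\lambda_N(z)$ supplied by Proposition~\ref{prop_analytic_branches} are exactly the $N$ eigenvalues of the matrix $h_{\ep}(z)$, counted with multiplicity. The matrices $\ep\,\mathfrak{b}_{\alpha}$ satisfy the weak symmetry \eqref{eq_delta_def} precisely because the $\mathfrak{b}_{\alpha}$ do, so Proposition~\ref{prop_flat_equivalent} applies to $h_{\ep}$: if $E$ were a flat band, then $E$ would be an eigenvalue of $h_{\ep}(z)$ for every $z\in Z$, hence $Z=\bigcup_{j=1}^N\{z\in Z:\lambda_j(z)=E\}$. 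Each set on the right is the zero set of the analytic function $\lambda_j-E$ on the connected open set $Z$, so it is either nowhere dense or all of $Z$; since a finite union of nowhere dense sets cannot be $Z$, some $\lambda_j$ is identically equal to $E$ on $Z$, contradicting non-constancy.

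Third I would pass back to the operators of Theorem~\ref{th_main_graph} via the identity $h_{\ep}(z)=\ep\bigl(\diag\{V_1/\ep,\dots,V_N/\ep\}+\sum_{\alpha\in\A}z^{\alpha}\mathfrak{b}_{\alpha}\bigr)$, remembering that in the application the $\mathfrak{b}_{\alpha}$ are precisely the Floquet blocks $b_{\alpha}(x,y)$ of the fixed edge weight $\mathfrak a$. Thus the spectrum of $h_{\ep}(z)$ is $\ep$ times the spectrum of the un-rescaled fiber operator with potential $(V_1/\ep,\dots,V_N/\ep)$, and $E$ is a flat band of the former if and only if $E/\ep$ is a flat band of the latter; hence $H$ has no flat band whenever its potential lies in the open set $\ep^{-1}U$. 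Finally, Corollary~\ref{cor_generic_flat_bands}, which needs only the existence of a single open set of flat-band-free potentials, upgrades this to the statement that the set of potentials with flat bands is contained in a proper affine algebraic sub-variety of $\C^N$, which is exactly the conclusion of Theorem~\ref{th_main_graph}. I expect the only mild friction to be keeping the rescaling straight and confirming that Propositions~\ref{prop_analytic_branches} and~\ref{prop_flat_equivalent} apply verbatim to $h_{\ep}$; all the substantive work has already been done in Lemmas~\ref{lemma_separate_powers}--\ref{lemma_make_open} and Corollary~\ref{cor_generic_flat_bands}.
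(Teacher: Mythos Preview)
Your proof is correct and follows the same two-step combination the paper invokes (Lemma~\ref{lemma_make_open} followed by Corollary~\ref{cor_generic_flat_bands}). You have simply made explicit the bridging steps---that non-constancy of every $\lambda_j$ rules out flat bands via Proposition~\ref{prop_flat_equivalent} and a Baire/analytic-set argument, and that the rescaling $h_{\ep}(z)=\ep\cdot h(z)|_{V\to V/\ep}$ transfers this to an open set of potentials for the original operator---which the paper leaves implicit in its one-sentence proof.
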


\section{Combinatorics of extremal loop configurations}
\subsection{Extremal loop configurations}
Corollary \ref{cor_main_reduction} reduces Theorem \ref{th_main_graph} to a purely algebraic statement about absence of cancellations between certain classes of loop configurations. Since we do not have any information about specific values of edge weights, besides the fact that $\Gamma$ is connected, our only hope would be, essentially, to find a footprint $\mathfrak f_j$ and a quasimomentum $\alpha_j\neq 0$ such that there is only one non-zero loop configuration with these parameters. Finding such ``non-cancelable'' loop configuration is a combinatorial problem which is complicated by lack of assumptions on the graph $\Gamma$.

This said loop configuration will be found by declaring several extremal properties, which have to be adjusted based on whether $\mathcal G$ is single-connected or multi-connected (that is, whether or not there are edges between same vertices of $\mathcal G$ with different quasimomenta). A loop configuration $\cP$ will be called  {\it extremal} if:
\begin{enumerate}
	\item $\cont(\cP)\neq 0$.
	\item $\q(\cP)\neq 0$.
	\item $|\cP|$ is smallest possible among loop configurations satisfying the above requirements.
	\item $\foot(\cP)$ has the smallest possible number of distinct elements among loop configurations satisfying the above requirements.
\end{enumerate}
Note that, if there are no loops satisfying (1) and (2), then one would not be able to connect some point on the fundamental domain of $\Gamma$ with any its translations by $\Z^d\setminus\{0\}$ using the edges of $\Gamma$, which would mean that $\Gamma$ is disconnected. Therefore, under the assumptions of Theorem $\ref{th_main_graph}$, the set of loop configurations under consideration is non-empty, and therefore extremal loops exist. Clearly, all such configurations associated to $\lambda_j$ have the same length.
A loop configuration $\cP$ will be called {\it symmetric extremal} if
\begin{enumerate}
	\item $\cont(\cP)\neq 0$.
	\item $\q(\cP)\neq 0$.
	\item $\foot(\cP)$ has exactly one element of multiplicity $1$, and all other elements have multiplicity two.
	\item $|\cP|$ is smallest possible among loop configurations satisfying the above requirements.
\end{enumerate}
At the moment, we do not make any claims regarding existence of such configurations. Finally, we will call a loop configuration $\cP$ {\it non-cancelable} if:
\begin{enumerate}
	\item $\cont(\cP)\neq 0$.
	\item $\q(\cP)\neq 0$.
	\item There is no other loop configuration contributing to $\lambda_j$ with the same quasimomentum and footprint.
\end{enumerate}
The following combinatorial theorem is the main technical result of the paper.
\begin{theorem}
\label{th_main_loops}
Under the assumptions of Theorem $\ref{th_main_graph}$, fix $j$ and suppose that $L$ is the length of (every) extremal loop configuration contributing to $\lambda_j$. Then, either every such extremal loop configuration is non-cancelable, or there exists a non-cancelable symmetric extremal loop of length $L+1$.
\end{theorem}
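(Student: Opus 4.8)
The plan is to analyze an extremal loop configuration $\cP$ and ask what could prevent it from being non-cancelable: there would have to be a \emph{different} loop configuration $\cP'$ with the same footprint $\foot(\cP')=\foot(\cP)$ and the same nonzero quasimomentum $\q(\cP')=\q(\cP)$, hence the same length $L$ (since $|\cP|$ equals the cardinality of the footprint multiset). First I would establish a structural dichotomy for extremal loops based on whether $\mathcal G$ is single-connected or multi-connected. The key combinatorial observation to exploit is minimality of the number of \emph{distinct} vertices in $\foot(\cP)$ among minimal-length loops with nonzero quasimomentum: this forces the footprint to be, in a precise sense, as ``spread out'' as possible is false—rather, it forces repeated visits to be consolidated, so I expect the extremal loop to be close to a simple loop that traverses a shortest path realizing a nonzero quasimomentum and returns along (almost) the reverse path. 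The weak-symmetry hypothesis \eqref{eq_delta_def} is what guarantees such a return path exists with nonzero contribution. I would first show (Step 1) that no attachments occur in an extremal configuration: an attachment at $n_s$ adds $\{n_s\}$ to the footprint and at least one edge, and since attached loops have zero quasimomentum by themselves they cannot help achieve a smaller simple-loop length, so removing the attachment yields a strictly shorter loop with the same (nonzero) quasimomentum, contradicting minimality of $|\cP|$—unless doing so drops a distinct vertex, but then one re-examines requirement (4). This reduces everything to simple loops.

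Next (Step 2), with $\cP$ a simple $j$-loop $j=n_0\to n_1\to\cdots\to n_k=j$ of minimal length with $\q\neq 0$, I would argue that the sequence of vertices must essentially be a geodesic-and-back: if the path deviated, one could shortcut it to reduce length while preserving the edge-quasimomentum sum, since the total quasimomentum only depends on which edges are used and not on the order, and any vertex visited can be bypassed by weak symmetry considerations. More carefully, minimality of $|\cP|$ first, then minimality of the number of distinct footprint vertices, pins down the footprint multiset: I expect to show it consists of one ``middle'' vertex visited once (the far end of the geodesic) and every other footprint vertex visited exactly twice (once on the way out, once on the way back)—which is precisely the symmetric condition (3) in the definition of symmetric extremal, \emph{when $L$ is even}. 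When $L$ is odd this exact symmetry is unavailable, which is the source of the ``length $L+1$'' alternative in the statement.

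Then (Step 3) I would prove non-cancelability. Suppose $\cP'\neq\cP$ has the same footprint and quasimomentum. Using the footprint structure from Step 2, the only freedom in building such a loop of length $L$ visiting a prescribed multiset of vertices with a prescribed quasimomentum is in the order of traversal and the choice of which parallel edge (quasimomentum) is used at each step. In the single-connected case there are no parallel edges, and a counting/rigidity argument on the underlying graph forces $\cP'=\cP$ up to the trivial reversal, which gives the \emph{same} contribution by weak symmetry; in the multi-connected case parallel edges introduce genuine ambiguity and it is here that the plain extremal loop may cancel, so one passes to a symmetric extremal loop of length $L+1$: the extra vertex of multiplicity one is used to break the symmetry that caused cancellation. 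I would show such a configuration exists (by appending one more step to the geodesic) and is non-cancelable by the same rigidity argument, now with the asymmetry preventing the problematic pairing.

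\textbf{Main obstacle.} The hard part will be Step 3 in the multi-connected case: controlling the cancellations coming from parallel edges with distinct quasimomenta and verifying that going ``one order higher'' (length $L+1$, the symmetric extremal configuration) genuinely removes \emph{all} cancellation rather than merely some of it. This requires a careful bookkeeping of how $\totalcont(\mathfrak f,\alpha)$ decomposes as a sum over orderings and edge-choices, and an argument that the symmetric footprint with a unique multiplicity-one vertex admits a \emph{unique} realizing loop configuration up to reversal. I expect this to need an explicit combinatorial normal form for these minimal loops and a separate treatment of the degenerate edge types alluded to at the end of Subsection 3.1, pushing the analysis into Subsections 3.2–3.3 as the excerpt indicates.
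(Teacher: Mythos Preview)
Your overall plan has the right skeleton (reduce to simple loops, establish a normal form, argue uniqueness), but two of your structural expectations are incorrect and would derail the argument.

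First, your Step~2 conjectures that an extremal loop has exactly \emph{one} vertex of multiplicity one and all others of multiplicity two. This is false. The paper's Steps 2--4 show instead that an extremal loop has the form
\[
n_0\xrightarrow{\alpha_1}\cdots\xrightarrow{\alpha_s}n_s\xrightarrow{\beta_1}m_1\xrightarrow{\beta_2}\cdots\xrightarrow{\beta_\ell}m_\ell\xrightarrow{\beta_{\ell+1}}n_s\xrightarrow{-\alpha_s}\cdots\xrightarrow{-\alpha_1}n_0,
\]
where the $n_i$ are each visited twice and the $m_i$ are each visited once, with $\ell$ possibly large. (A simple example: if $j$ only reaches nonzero quasimomentum by traversing a cycle through several distinct vertices, all of those are single-visit.) Consequently your rigidity argument in Step~3 is too optimistic: even in the single-connected case there is real freedom in the \emph{ordering} of $m_1,\ldots,m_\ell$, and ruling out alternative orderings requires the special-permutation Lemma~\ref{lemma_special_permutation} together with the quasimomentum bookkeeping in Steps~7--8, which shows that any flipped segment forces $\q(\cQ)+\q(\cQ')=0$ while $\q(\cQ)=\q(\cQ')\neq 0$.

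Second, the $L+1$ alternative has nothing to do with the parity of $L$. It arises precisely when $\ell$ is even and the central edge $m_{\ell/2}\to m_{\ell/2+1}$ of the non-repeated part is a multi-edge (two quasimomenta $\beta\neq\beta'$). In that case the length-$L$ extremal loop may genuinely cancel, and the remedy is not ``appending one more step to the geodesic'' but replacing the entire non-repeated cycle by $n_s\xrightarrow{\beta}m\xrightarrow{-\gamma}n_s$ using two distinct quasimomenta on that multi-edge. Non-cancelability of \emph{this} symmetric extremal loop is then secured by choosing $\beta,\gamma$ to maximize the Euclidean norm $|\beta-\gamma|$: any competing pair $(\beta',\gamma')$ with $\beta'-\gamma'=\beta-\gamma$ would make $\beta,\gamma,\beta',\gamma'$ the vertices of a parallelogram, one of whose diagonals is strictly longer than $|\beta-\gamma|$, contradicting maximality. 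Your proposal does not anticipate this extremal-choice-within-the-extremal-class step, and without it the multi-edge case does not close.
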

\subsection{Proof of Theorem \ref{th_main_loops}: the main case}
Let $\cP$ be an extremal loop configuration. Our goal is to perform a series of reductions that will gradually impose more and more requirements on the structure of $\cP$, which, except for one situation, will lead to its uniqueness among loop configurations with the same quasimomentum and footprint. In the exceptional situation, considered in the next subsection, we will be able to construct a non-cancelable symmetric extremal loop by a modification of $\cP$.

{\it 1. No attachments.} Any extremal loop configuration must, in fact, be a simple loop. Indeed, if it contains an attached loop of non-zero quasimomentum, then one can replace $\cP$ by that attached loop, contradicting the minimality of the length. If it contains an attached loop of zero quasimomentum, that attachment can be removed, again producing a shorter loop.

{\it 2. No triple repetitions.} Suppose that $\cP$ visits a vertex $n\in \{1,\ldots,N\}\setminus\{j\}$ three times. With a slight abuse of notation, one can write 
$$
\cP=\cP_1\xrightarrow{\gamma_1} n\xrightarrow{\gamma_2} \cP_2\xrightarrow{\gamma_3} n\xrightarrow{\gamma_4}\cP_3\xrightarrow{\gamma_5}n\xrightarrow{\gamma_6}\cP_4.
$$
Since $\cP$ was the shortest loop, we have $\quasi(\cP_1\xrightarrow{\gamma_1} n\xrightarrow{\gamma_6} \cP_4)=0$. As a consequence, either $\quasi(n\xrightarrow{\gamma_2} \cP_2\xrightarrow{\gamma_3} n)\neq 0$ or $\quasi(n\xrightarrow{\gamma_4} \cP_3\xrightarrow{\gamma_5} n)\neq 0$, and therefore either $\cP_1\xrightarrow{\gamma_1} n\xrightarrow{\gamma_2} \cP_2\xrightarrow{\gamma_3} n\xrightarrow{\gamma_6}\cP_4$ or $\cP_1\xrightarrow{\gamma_1} n\xrightarrow{\gamma_4} \cP_3\xrightarrow{\gamma_5} n\xrightarrow{\gamma_6}\cP_4$ is shorter than $\cP$ and has non-zero quasimomentum.

{\it 3. Symmetry of double repetitions. }Suppose that $\cP$ visits a vertex $n$ twice, so that
$$
\cP=\cP_1\xrightarrow{\gamma_1} n\xrightarrow{\gamma_2} \cP_2\xrightarrow{\gamma_3} n\xrightarrow{\gamma_4}\cP_3.
$$
Since $\cP$ is extremal, we must have $\quasi(\cP_1\xrightarrow{\gamma_1} n\xrightarrow{\gamma_4} \cP_3)=0$. As a consequence, 
\beq
\label{eq_symmetry_quasi}
\quasi(\cP_1\xrightarrow{\gamma_1}n)=-\quasi(n\xrightarrow{\gamma_4}\cP_3).
\eeq
We claim that $\cP_1$ and $\cP_3$ must have the same length. Indeed, suppose that, say, $\cP_1$ is shorter. Then the loop
$$
\cP_1\xrightarrow{\gamma_1} n\xrightarrow{\gamma_2} \cP_2\xrightarrow{\gamma_3} n\xrightarrow{-\gamma_1}{\cP_1^{-1}},
$$
where ${\cP_1^{-1}}$ is $\cP_1$ taken in the reverse direction, has the same quasimomentum as $\cP$ and is shorter. By contradiction, this implies that all repetitions in $\cP$ have to happen symmetrically about the center.

{\it 4. Symmetrization. }Suppose $\cP$ has two repeated vertices $m$ and $n$, and $m$ is visited first: 
$$
\cP=\cP_1\xrightarrow{\gamma_1} m\xrightarrow{\gamma_2} \cP_2\xrightarrow{\gamma_3} n\xrightarrow{\gamma_4}\cP_3\xrightarrow{\gamma_5}n\xrightarrow{\gamma_6}\cP_4\xrightarrow{\gamma_7}m\xrightarrow{\gamma_8}\cP_5.
$$
From \eqref{eq_symmetry_quasi}, we have
$$
\quasi(m\xrightarrow{\gamma_2}\cP_2\xrightarrow{\gamma_3}n)=-\quasi(n\xrightarrow{\gamma_6}\cP_4\xrightarrow{\gamma_7}m).
$$
Therefore, one can replace one of these segments by the other in the reverse order and obtain a new loop
$$
\cQ=\cP_1\xrightarrow{\gamma_1} m\xrightarrow{\gamma_2} \cP_2\xrightarrow{\gamma_3} n\xrightarrow{\gamma_4}\cP_3\xrightarrow{\gamma_5}n\xrightarrow{-\gamma_3}\cP_2^{-1}\xrightarrow{-\gamma_2}m\xrightarrow{\gamma_8}\cP_5
$$
with $\quasi(\cQ)=\quasi(\cP)$. However, unless $\cP_2$ and $\cP_4$ have the same footprints, $\cQ$ will have more repetitions than $\cP$, which contradicts the extremality of $\cP$. As a consequence, the vertices of $\cP_2$ must repeat the vertices in $\cP_4$. 

In view of the previous step, the repetition has to happen in the same order. Using \eqref{eq_symmetry_quasi}, we can conclude that the mirror image structure also applies to the quasimomenta over the arrows in $\cP_2$ and $\cP_4$ (in particular, $\gamma_6=-\gamma_3$ and $\gamma_7=-\gamma_2$). Summarizing the steps so far, we have that every extremal loop configuration has to be of the form
\beq
\label{eq_p_form}
\cP=n_0\xrightarrow{\alpha_1}n_1\xrightarrow{\alpha_2}\ldots\xrightarrow{\alpha_s}n_s\xrightarrow{\beta_1}m_1\xrightarrow{\beta_2}\ldots\xrightarrow{\beta_\ell}m_\ell\xrightarrow{\beta_{\ell+1}}n_s\xrightarrow{-\alpha_s}n_{s-1}\xrightarrow{-\alpha_{s-1}}\ldots\xrightarrow{-\alpha_1}n_0,
\eeq
where the vertices denoted by different letters must be distinct.

{\it 5. Uniqueness of the repeated part. }As follows from Part 4 and the extremality of $\cP$, the values $\alpha_1,\ldots,\alpha_s$ in \eqref{eq_p_form} are uniquely determined by the vertices $n_1,\ldots,n_s$ and the order in which they are taken. We now claim that they are, in fact, determined by (the repeating part of) the footprint of $\cP$. Indeed, suppose that $\cQ$ is another extremal loop of the form \eqref{eq_p_form}, but with $n_1,\ldots,n_s$ taken in a different order (since both are $j$-loops, the first and last vertex of $\cP$ and $\cQ$ must be equal to $j$). Suppose that 
$$
\cQ=\cQ_1\xrightarrow{\gamma} n_i \xrightarrow{}\ldots \xrightarrow{} n_i\xrightarrow{-\gamma}\cQ_1^{-1},
$$
where $n_i$ is the first vertex of $\cQ$ that is different from the corresponding one in $\cP$, so that $\cQ_1$ is some prefix of $\cP$ and $\cQ_1^{-1}$ is its mirror image. Then the loop
$$
\cQ_1\xrightarrow{\gamma} n_i\xrightarrow{\alpha_{i+1}}\ldots\xrightarrow{\alpha_s}n_s\xrightarrow{\beta_1}m_1\xrightarrow{\beta_2}\ldots\xrightarrow{\beta_\ell}m_\ell\xrightarrow{\beta_{\ell+1}}n_s\xrightarrow{-\alpha_s}\ldots\xrightarrow{-\alpha_{i+1}}n_i\xrightarrow{-\gamma}\cQ_1^{-1}
$$
is shorter than $\cP$, since the repeated part ``skips'' the vertices between $n_i$ and those contained in $\cQ_1$. It also has the same quasimomentum, since the non-repeated part is the same as in $\cP$ and the repeated one cancels out in all cases.

{\it 6. Quasimomenta between adjacent vertices in the non-repeated part. }Once we have determined that there is no ambiguity in the repeated vertices and the corresponding quasimomenta, our attention is now with the non-repeated part
$$
\cQ:=n_s\xrightarrow{\beta_1}m_1\xrightarrow{\beta_2}\ldots\xrightarrow{\beta_\ell}m_\ell\xrightarrow{\beta_{\ell+1}}n_s,
$$
where it is convenient to include $n_s$ into consideration, making it an $n_s$-loop. Before looking into possible ambiguities in the order of vertices $m_i$, let us first assume that the order is fixed and discuss ambiguity between the choices of the values $\beta_i$. Clearly, if $\Gamma$ is single-connected, there is no such ambiguity and this step is not necessary. Suppose that, say, $\beta_i\neq \beta_i'$ are two possible values of quasimomenta between $m_{i-1}$ and $m_i$, and $i<\ell/2+1$. Then, the path
$$
n_s\xrightarrow{\beta_1}m_1\xrightarrow{\beta_2}\ldots\xrightarrow{\beta_i}m_i\xrightarrow{-\beta_{i}'}m_{i-1}
\xrightarrow{-\beta_{i-1}}m_{i-2}\xrightarrow{-\beta_{i-3}}\ldots\xrightarrow{-\beta_2}m_1\xrightarrow{-\beta_1}
n_s
$$
is either shorter than $\cQ$ or of the same length and has more repetitions. Since $\beta_i\neq \beta_i'$, its quasimomentum is still non-zero (everything else cancels). By adding back the original symmetric part of $\cP$, we obtain a contradiction with its extremality. A similar construction (with reflecting the right part instead of the left part) works for $i>\ell/2+1$. If $\ell$ is odd, this covers all possible integer values of $i$. If $\ell$ is even and $i=\ell/2+1$ (the edge located exactly in the middle of $\cQ$), the construction is still possible, but we do not obtain an immediate contradiction, since the length of the path will be increased by one. However, the new constructed path will have only one non-repeated vertex, making it a symmetric extremal path of length $L+1$, a candidate for the second case described in the statement of the lemma. This special case will be considered during the later steps. The edge with multiple values of quasimomenta will be referred to as a {\it multi-edge}.

{\it 7. Order of vertices in the repeated part: special permutation property}. Our next goal in this part is to determine how much ambiguity is allowed in the order of $m_1,\ldots,m_\ell$ in the non-repeated part
$$
\cQ:=n_s\xrightarrow{\beta_1}m_1\xrightarrow{\beta_2}\ldots\xrightarrow{\beta_\ell}m_\ell\xrightarrow{\beta_{\ell+1}}n_s.
$$
Suppose that there is another arrangement
$$
\cQ'=n_s\xrightarrow{\beta_1'}m_1'\xrightarrow{\beta_2'}\ldots\xrightarrow{\beta_\ell'}m_\ell'\xrightarrow{\beta_{\ell+1}'}n_s,
$$
where $(m_1',\ldots,m_\ell')$ is some permutation of $(m_1',\ldots,m_\ell')$. Recall that $\q(\cQ)=\q(\cQ')\neq 0$, otherwise there is nothing to prove. We will show that the corresponding permutation is of the kind considered in Lemma \ref{lemma_special_permutation} below. Indeed, suppose that $\cQ$ contains two points $m,n$ that are in the opposite order in $\cQ'$, so that
$$
\cQ=\cQ_1\xrightarrow{\gamma_1}m\xrightarrow{\gamma_2}\cQ_2 \xrightarrow{\gamma_3}n\xrightarrow{\gamma_4}\cQ_3,\quad \cQ'=\cQ_1'\xrightarrow{\gamma'_1}n\xrightarrow{\gamma_2'}\cQ_2' \xrightarrow{\gamma_3'}m\xrightarrow{\gamma'_4}\cQ_3'.
$$
Since we assumed that no shorter path with non-zero quasimomentum exists, both paths
$$
\cQ_1\xrightarrow{\gamma_1}m\xrightarrow{\gamma_4'}\cQ_3',\quad \cQ_1'\xrightarrow{\gamma_1'}n\xrightarrow{\gamma_4}\cQ_3
$$
must have zero quasimomenta (note that each of them is strictly shorter than both $\cQ$ and $\cQ'$). However, this implies that, if one replaces the part $m\xrightarrow{\gamma_2}\cQ_2 \xrightarrow{\gamma_3}n$ in $\cQ$ by  $m\xrightarrow{-\gamma_3'}\cQ_2^{-1} \xrightarrow{-\gamma_2'}n$ (taken from $\cQ'$ in the reverse order), this will change the quasimomentum of $\cQ$ to the one of the opposite sign, and vice versa. Since both $\cQ$ and $\cQ'$ are of the same length and cannot be shortened, such replacement should not change the length, and therefore the lengths of the corresponding segments must coincide. This confirms the assumptions of Lemma \ref{lemma_special_permutation}.

{\it 8. Order of vertices in the repeated part: flipped segment with no multi-edge}. In view of Lemma \ref{lemma_special_permutation}, the vertices in the alternative configuration $\cQ'$ considered in Step 7 are obtained by a sequence of mirror flips of some non-overlapping segments of $\cQ$. In this step, we will consider the case when at least one flipped segment does not contain the multi-edge considered in Step 6. In particular, this includes the case where there is no multi-edges at all such as the case of odd $\ell$. 

It will be convenient to use the following notation
$$
\cQ=\cQ_1\xrightarrow{\gamma_1}a_1\xrightarrow{\eta_2}a_2\xrightarrow{\eta_3}\ldots\xrightarrow{\eta_t}a_t\xrightarrow{\gamma_2}\cQ_2,
$$
$$
\cQ'=\cQ_1'\xrightarrow{\gamma_1'}a_t\xrightarrow{\eta_t'}a_{t-1}\xrightarrow{\eta_{t-1}'}\ldots\xrightarrow{\eta_2'}a_1\xrightarrow{\gamma_2'}\cQ_2',
$$
where $a_1,\ldots,a_t$ are the vertices of a flipped segment. Assuming that it does not contain a multi-edge, Step 6 implies that we must have 
\beq
\label{eq_quasi_cancel_1}
\eta_i'=-\eta_i,\quad  i=2,\ldots,t,
\eeq 
which explains the choice in the notation. The segments $\cQ_1$, $\cQ_2$ may or may not be the same as the segments $\cQ_1'$, $\cQ_2'$, depending on whether there are multiple flipped intervals, but we always have
$$
|\cQ_1|=|\cQ_1'|,\quad |\cQ_2|=|\cQ_2'|.
$$
This allows us to construct two shorter paths, which both must have zero quasimomenta due to extremality of the original path $\cP$ (of which $\cQ$ is the non-repeating part):
\beq
\label{eq_quasi_cancel_2}
\quasi(\cQ_1\xrightarrow{\gamma_1}a_1\xrightarrow{\gamma_2'}\cQ_2')=\quasi(\cQ_1'\xrightarrow{\gamma_1'}a_t\xrightarrow{\gamma_2}\cQ_2)=0.
\eeq
However,
\begin{multline*}
\quasi(\cQ)+\quasi(\cQ')=\quasi(\cQ_1\xrightarrow{\gamma_1}a_1)+\quasi(a_1\xrightarrow{\eta_2}a_2\xrightarrow{\eta_3}\ldots\xrightarrow{\eta_t}a_t)+\quasi(a_t\xrightarrow{\gamma_2}\cQ_2)+\\+
\quasi(\cQ_1'\xrightarrow{\gamma_1'}a_t)+\quasi(a_t\xrightarrow{\eta_t'}a_{t-1}\xrightarrow{\eta_{t-1}'}\ldots\xrightarrow{\eta_2'}a_1
)+\quasi(a_1\xrightarrow{\gamma_2'}\cQ_2')=\\=\quasi(\cQ_1\xrightarrow{\gamma_1}a_1\xrightarrow{\gamma_2'}\cQ_2')+\quasi(\cQ_1'\xrightarrow{\gamma_1'}a_t\xrightarrow{\gamma_2}\cQ_2)+\\+\quasi(a_1\xrightarrow{\eta_2}a_2\xrightarrow{\eta_3}\ldots\xrightarrow{\eta_t}a_t)+\quasi(a_t\xrightarrow{\eta_t'}a_{t-1}\xrightarrow{\eta_{t-1}'}\ldots\xrightarrow{\eta_2'}a_1)=0
\end{multline*}
due to \eqref{eq_quasi_cancel_1}, \eqref{eq_quasi_cancel_2}. On the other hand, we assumed that both $\cQ$ and $\cQ'$ are non-repeating parts of extreme loops with the same quasimomenta, that is, $\quasi(\cQ)=\quasi(\cQ')\neq 0$, thus producing a contradiction.

This completes the proof under the assumption that the flipped segment does not contain a multi-edge, modulo the following lemma which was announced earlier.
\begin{lemma}
\label{lemma_special_permutation}
Let $\sigma\colon \{1,\ldots,N\}\to \{1,\ldots,N\}$ be a permutation with the following property: for every $1\le i<j\le N$ with $\sigma(i)>\sigma(j)$, we have $|\sigma(i)-\sigma(j)|=|i-j|$. Then there exists a partition of $\{1,\ldots,N\}$ into intervals that are invariant under $\sigma$, and on each interval $\sigma$ acts either as identity of reflection.
\end{lemma}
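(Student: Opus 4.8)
The plan is to prove the lemma by strong induction on $N$. The key observation is that the single value $m := \sigma(1)$ already determines $\sigma$ on the initial block $\{1,\ldots,m\}$, forcing $\sigma$ to act there by reflection and to leave the complementary block $\{m+1,\ldots,N\}$ invariant; then one simply iterates. The base case $N=1$ is trivial (only the identity, partitioned into the single interval $\{1\}$).

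First I would extract the consequence of the hypothesis for pairs involving the first position. If $j>1$ and $\sigma(j)<\sigma(1)=m$, then $(1,j)$ is an inversion, so $m-\sigma(j)=j-1$, i.e. $\sigma(j)=m+1-j$. Since $\sigma(j)\ge 1$ forces $j\le m$ and $\sigma(j)\le m-1$ forces $j\ge 2$, every position carrying a value below $m$ lies in $\{2,\ldots,m\}$. There are exactly $m-1$ such positions (the values $1,\ldots,m-1$ are all attained, and none at position $1$), and $\{2,\ldots,m\}$ has exactly $m-1$ elements, so the set of positions with value $<m$ is \emph{precisely} $\{2,\ldots,m\}$, and there $\sigma(j)=m+1-j$. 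Combined with $\sigma(1)=m=m+1-1$, this gives $\sigma(j)=m+1-j$ for all $j\in\{1,\ldots,m\}$: thus $\sigma$ maps the interval $\{1,\ldots,m\}$ onto itself by reflection, and consequently maps the interval $\{m+1,\ldots,N\}$ onto itself as well.

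Next I would apply the induction hypothesis to the restriction of $\sigma$ to $\{m+1,\ldots,N\}$. Relabeling this block as $\{1,\ldots,N-m\}$ via $k\mapsto k-m$, the induced permutation $\tau$ defined by $\tau(k)=\sigma(k+m)-m$ still satisfies the hypothesis, since an inversion of $\tau$ corresponds to an inversion of $\sigma$ among positions $\ge m+1$, and the common shift changes neither the position gap $|i-j|$ nor the value gap $|\sigma(i)-\sigma(j)|$. By the inductive hypothesis, $\{m+1,\ldots,N\}$ partitions into $\sigma$-invariant intervals on each of which $\sigma$ is the identity or a reflection; adjoining the interval $\{1,\ldots,m\}$ (on which $\sigma$ is a reflection, or the identity when $m=1$) produces the required partition of $\{1,\ldots,N\}$. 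There is no genuine obstacle: the only mildly delicate point is the pigeonhole count identifying $\{2,\ldots,m\}$ as exactly the set of positions with value below $m$, and that is immediate once the formula $\sigma(j)=m+1-j$ has been derived.
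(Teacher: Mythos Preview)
Your proof is correct and follows essentially the same strategy as the paper's: both arguments show that the initial interval $\{1,\ldots,\sigma(1)\}$ is $\sigma$-invariant with $\sigma$ acting by reflection, and then iterate on the remainder. The paper presents this more tersely by tracing the sequence $\sigma(1),\sigma(2),\ldots$ through its maximal decreasing runs (each of which must step down by exactly $1$, hence the first run goes from $\sigma(1)$ down to $1$), whereas your pigeonhole count via the inversion condition with $i=1$ is a cleaner and more explicit execution of the same idea.
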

\begin{proof}
Consider the sequence $\sigma(1),\ldots,\sigma(N)$ and split it into decreasing segments. Note that on each decreasing segment it has to decrease exactly by $1$ each step. As a consequence, if $\sigma(1)\neq 1$, then one must have $\sigma(2)=\sigma(1)-1$ and so one, until $1$ is reached. Afterwards, unless the next value is $\sigma(1)+1$, the sequence would have to decrease again until $\sigma(1)+1$ is reached, and so on.
\end{proof}
\subsection{Proof of Theorem \ref{th_main_loops}: the case of a multi-edge} Suppose that $\Gamma$ contains a multi-edge that appears in a way that does not allow one to complete Step 8 in Subsection 3.2 above. Let $\cP$ be (any) extremal loop, and $\cR$ be the symmetric loop produced during Step 6. Since $|\cR|=|\cP|+1$, we have that $\cR$ is actually symmetric extremal, since any symmetric loop configuration of shorter length will have at least two vertices fewer, contradicting the fact that the loop $\cP$ was the shortest within the class used to define extremal loops.

Let $\cR'$ be another loop configuration with
$$
\foot(\cR')=\foot(\cR), \quad \quasi(\cR')=\quasi(\cR).
$$
We will need to retrace some of the previous steps in order to determine how much ambiguity $\cR'$ has within the above constraints and, ultimately, see whether its contribution may cancel that of $\cR$.

{\it Step 1}: $\cR'$ has no attachments, since an argument similar to Step 1 (that is, removing the attachment or removing everything besides the attachment) would produce an extremal loop of length at most $|\cR|-2=|\cR'|-2$, contradicting the fact that extremal loops must have length $|\cP|=|\cR|-1$.

{\it Step 2}: $\cR$ has no triple repetitions already by construction.

{\it Steps 3 and 4}: symmetrization may potentially reduce the length of $\cR'$ by $1$, making it an extremal loop with no non-repeated vertices. This is only possible if $n_s$ has an edge into itself. Since the quasimomentum of that edge is equal to the quasimomentum of the whole loop, it will not have the issue described in Step 6 and will therefore be non-cancelable. In other words, this case is already considered in the framework of Steps 1 -- 8 in the previous Subsection 3.2.

{\it Step 5}: the argument goes same way as in the previous case.

Summarizing, let 
$$
\cR=n_0\xrightarrow{\alpha_1}n_1\xrightarrow{\alpha_2}\ldots\xrightarrow{\alpha_s}n_s\xrightarrow{\beta}m\xrightarrow{-\gamma}n_s\xrightarrow{-\alpha_s}n_{s-1}\xrightarrow{-\alpha_{s-1}}\ldots\xrightarrow{-\alpha_1}n_0
$$
be a symmetric extremal loop. Then any other loop configuration that may potentially cancel $\cR$ must be of the form
$$
\cR'=n_0\xrightarrow{\alpha_1}n_1\xrightarrow{\alpha_2}\ldots\xrightarrow{\alpha_s}n_s\xrightarrow{\beta'}m\xrightarrow{-\gamma'}n_s\xrightarrow{-\alpha_s}n_{s-1}\xrightarrow{-\alpha_{s-1}}\ldots\xrightarrow{-\alpha_1}n_0.
$$
The choice of the signs will be convenient for the following calculation. In other words, the only ambiguity is the quasimomenta that involve the multi-edge between $n_s$ and $m$. If there are two different pairs of quasimomenta satisfying $\beta-\gamma=\beta'-\gamma'$, then one can have a cancellation between the contributions of $\cR$ and $\cR'$. In order to produce a non-cancelable loop, assume in the choice of $\cR$, in addition, that the Euclidean norm $|\beta-\gamma|$ takes the largest possible value. A potential cancellation between two such pairs would imply
$$
\beta-\gamma=\beta'-\gamma',
$$
which implies that $\beta$, $\gamma$, $\beta'$, and $\gamma'$ are vertices of a parallelogram, with one of the sides of length $|\beta-\gamma|$. However, one of the diagonals $|\beta-\gamma'|$ or $|\beta'-\gamma|$ of the parallelogram will have strictly larger length, and therefore the corresponding choice of multi-edges will provide strictly larger Euclidean norm of the total quasimomentum.\,\,\qed
\subsection{Some examples}
We note that there exist graphs where every extremal loop configuration is cancelable. Consider the following graph.

\begin{figure}[ht]
\centering
\begin{tikzpicture}[x=1cm,
        every node/.style={inner sep=1pt},
        edge/.style={-},
        shortenL/.style={shorten >=6pt},   
        shortenR/.style={shorten <=6pt}]   
\clip (-0.8,-2) rectangle (11.7,1.5);

\foreach \i/\lab in {0/a,1/b,2/c,3/a,4/b,5/c,6/a,7/b,8/c,9/a,10/b,11/c}
  \node[ellipse,draw] (\lab\i) at (\i,0) {\lab};

\foreach \i in {0,...,3}{
  \draw[edge] ({3*\i+0.2},0)   -- ++({1-0.4},0) node[midway,above] {1};

  \draw[edge] ({3*\i+1.2},0)   -- ++({1-0.4},0) node[midway,above] {1};}

\foreach \i in {-3,0,3,6,9}
  \draw[edge,bend right] ({\i +0.14},{-0.14}) to node[below] {1} ++({5-0.28},0);

\foreach \i in {-5,-2,1,4,7,10}
  \draw[edge,bend left] ({\i+0.155},{0.17}) to node[above] {-1} ++({5-0.28},{-0.02});

\node at (-0.5,0)  {$\cdots$};
\node at (11.5,0)  {$\cdots$};

\end{tikzpicture}
\end{figure}

The corresponding Floquet transformed operator matrix operator is given by \[h(z) = \begin{pmatrix} V_1 & 1 - z_1^{-2} & z_1 \\ 1 -z_1^2 & V_2 & 1 \\ z_1^{-1} & 1 & V_3 \end{pmatrix}. \]

It is easy to see that with the given edge assignment, the contributions of all loops $\cP$ centered at $c$ with $\q(\cP)\ = \pm 1$ cancel each other. However, the symmetric extremal loops, which are one step longer, do not completely cancel, which should be anticipated from the previous section. As a consequence, the corresponding Schr\"odinger operator does not have any flat bands, but the order of $\ep$ in which one can see that is higher than one would normally expect.


We also note that, 
    for general connected graphs, one should not expect a substantial improvement of the ``generic $V$'' claim. Indeed, there exist graphs where the space of potentials such that they admit a flat band is exactly codimension $1$. Consider, for example, the well studied Lieb lattice, which has the finite matrix operator. 
    \[h(z) = \begin{pmatrix} V_1 & 1 + z_1^{-1} & 1+z_2^{-1} \\ 1 +z_1 & V_2 & 0 \\ 1+z_2 & 0 & V_3 \end{pmatrix}. \]

    It is easy to see that the dispersion relation of this operator has a flat band exactly when $V_2 = V_3$.

\end{document}